\theoremstyle{plain}      
\newtheorem{thm}{Theorem}[section]     
\newtheorem{theorem}[thm]{Theorem}
\newtheorem{lemma}[thm]{Lemma}     
\newtheorem{proposition}[thm]{Proposition}     
\theoremstyle{remark} 
\newtheorem{example}[thm]{Example} 
\newtheorem{remark}[thm]{Remark}
\theoremstyle{definition}      
\newtheorem{definition}[thm]{Definition}     
\DeclareMathAlphabet{\doba}{U}{msb}{m}{n}
\def\dd{\mathrm{d}}
\newcommand{\definedas}{\mathrel{\raise.095ex\hbox{\rm :}\mkern-5.2mu=}}
\title{Toric Vaisman Manifolds}
\author{Mihaela Pilca}
\address{Mihaela Pilca\\Fakult\"at f\"ur Mathematik\\
Universit\"at Regensburg\\Universit\"atsstr. 31 
D-93040 Regensburg, Germany
\emph{and} 
Institute of Mathematics ``Simion Stoilow" of the Romanian Academy, 
21, Calea Grivitei Str.
010702-Bucharest, Romania}
\email{mihaela.pilca@mathematik.uni-regensburg.de}
\subjclass[2010]{}
\keywords{Vaisman manifold, toric manifold, Sasaki structure, twisted Hamiltonian action, locally conformally K\"ahler manifold} 
\begin{document}

\begin{abstract}
Vaisman manifolds are strongly related to K\"ahler and Sasaki geo\-metry. 
In this paper we introduce  
toric Vaisman structures and show that 
this relationship still holds in the toric context. It is known that the so-called minimal covering of a Vaisman manifold is the Riemannian cone over a Sasaki manifold. We show that if a complete Vaisman manifold is toric, then the associated Sasaki manifold is also toric. Conversely, a toric complete Sasaki manifold, whose K\"ahler cone is equipped with an appropriate compatible action, gives rise to a toric Vaisman manifold. In the special case of a strongly regular compact Vaisman manifold, we show that it is toric if and only if the corresponding K\"ahler quotient is toric.
\end{abstract}

\maketitle

\section{Introduction}

Toric geometry has been studied intensively, as mani\-folds with many symmetries often occur in physics and also represent a large source of examples as testing ground for conjectures. 
The classical case of compact symplectic toric manifolds has been completely classified by T. Delzant \cite{A5Delzant1988}, who showed that they are in one-to-one correspondence to the so-called Delzant polytopes, obtained as the image of the momentum map. Afterwards, si\-mi\-lar classification results have been given in many different geometrical settings, some of which we briefly mention here. For instance, classification results were obtained by Y.~Karshon and E.~Lerman \cite{A5KL2015} for non-compact symplectic toric manifolds and by E.~Lerman and S.~Tolman \cite{A5LT1997} for symplectic orbifolds. The case when one additionally considers compatible metrics invariant under the toric action is also well understood: compact toric K\"ahler manifolds have been investigated by V.~Guillemin \cite{A5Guillemin1994}, D.~Calderbank, L.~David and P.~Gauduchon \cite{A5CDG2003},  M.~Abreu \cite{A5Abreu1998},
and compact toric K\"ahler orbifolds in \cite{A5Abreu2001}. Other more special structures have been completely classified, such as orthotoric K\"ahler, by V.~Apostolov, D.~Calderbank and  P.~Gauduchon \cite{A5ACG2006} or toric hyperk\"ahler, by R. Bielawski and  A. Dancer \cite{A5BD2000}.  The odd-dimensional counterpart, namely the compact contact toric manifolds, are classified by E.~Lerman \cite{A5L2003}, whereas toric Sasaki manifolds were also studied by M.~Abreu  \cite{A5Abreu2010}, \cite{A5BG2008}. These were used to produce examples of compact Sasaki-Einstein manifolds, for instance by D.~Martelli, J.~Sparks and S.-T.~Yau \cite{A5MSY2006}, A.~Futaki, H.~Ono and G.~Wang~\cite{A5FOW2009}, C.~van~Coevering~\cite{A5vC2009}.

In the present paper, we consider toric geometry in the context of locally conformally K\"ahler manifolds. These are defined as complex manifolds admitting a compatible metric, which, on given charts, is conformal to a local K\"ahler metric. The differentials of the logarithms of the conformal factors glue up to a well-defined closed $1$-form, called the Lee form. We are mostly interested in the special class of so-called Vaisman manifolds, defined by the additional property of having parallel Lee form. By analogy to the other geometries, we introduce the notion of toric locally conformally K\"ahler manifold. More precisely, we require the existence of an effective torus action of dimension half the dimension of the manifold, which preserves the holomorphic structure and is twisted Hamiltonian. I.~Vaisman \cite{A5Vaisman1985} introduced twisted Hamiltonian actions and 
they have been used for instance by S.~Haller and T.~Rybicki \cite{A5HR2001} and
by R. Gini, L. Ornea and M. Parton \cite{A5GOP2005}, where reduction results for locally symplectic, respectively locally conformal K\"ahler manifolds are given, or more recently by A.~Otiman \cite{A5Otiman}.

Vaisman geometry is closely related to both Sasaki and K\"ahler geometry. In fact, a locally conformally K\"ahler manifold may be equivalently defined as a manifold whose universal covering is K\"ahler and on which the fundamental group acts by holomorphic homotheties. For Vaisman manifolds, the universal and the minimal covering are K\"ahler cones over Sasaki manifolds, as proven in \cite{A5Vaisman1979}, \cite{A5GOPP2006}. 
On the other hand, in the special case of strongly regular compact Vaisman manifolds, the quotient by the $2$-dimensional distribution spanned by the Lee and anti-Lee vector fields is a K\"ahler manifold, \emph{cf.} \cite{A5Vaisman1982}.

The purpose of this paper is to make a first step towards a possible classification of toric Vaisman, or more generally, toric locally conformally K\"ahler manifolds, by showing that the above mentioned connections between Vaisman and Sasaki, respectively K\"ahler manifolds are still true when imposing the toric condition. For the precise statements of these equivalences, we refer to Theorem~\ref{A5vaissas}, Theorem~\ref{A5sasvais} and Theorem~\ref{A5vaisreg}.

{\it Acknowledgement.} I would like to thank P.~Gauduchon and A.~Moroianu for useful discussions and L.~Ornea for introducing me to locally conformally K\"ahler geometry. 

\section{Preliminaries}
A {\it locally conformally K\"ahler manifold} (shortly lcK) is a conformal Hermitian manifold $(M^{2n}, [g], J)$ of complex dimension $n\geq 2$, such that for one (and hence for all) metric $g$ in the conformal class, the corresponding fundamental $2$-form $\omega:=g(\cdot,J\cdot)$ satisfies: $\mathrm{d} \omega=\theta\wedge\omega$, with $\theta$ a closed $1$-form, called the {\it Lee form} of the Hermitian structure $(g,J)$. 
Equivalently, there exists an atlas on $M$, such that the restriction of $g$ to any chart is conformal to a K\"ahler metric. In fact, the differential of the logarithmic of the conformal factors are, up to a constant, equal to the  Lee form. It turns out to be convenient to denote also by $(M,g,J,\theta)$ an lcK manifold, when fixing one metric $g$ in the conformal class. By $\nabla$ we denote the Levi-Civita connection of $g$.

We denote  by $\theta^\sharp$
the vector field dual to $\theta$ with respect to the metric $g$, the so-called {\it Lee vector field} of the lcK structure, and by $J\theta^\sharp$ the {\it anti-Lee vector field}.

\begin{remark}\label{A5nablaJ}
 On an lcK manifold $(M,g,J,\theta)$, the following formula for the covariant derivative of $J$ holds:
 \[2\nabla_XJ = X\wedge J\theta^\sharp +JX\wedge \theta^\sharp,\quad \forall X\in\mathfrak{X}(M),\]
 or, more explicitly, applied to any vector field $Y\in \mathfrak{X}(M)$:
 \[2(\nabla_XJ)(Y)=\theta(JY)X-\theta(Y)JX+g(JX,Y)\theta^\sharp+g(X,Y)J\theta^\sharp.\]
 In particular, it follows that 
 $\nabla_{\theta^\sharp}J=0$ and $\nabla_{J\theta^\sharp}J=0$.
\end{remark}

\begin{remark}\label{A5omth}
On an lcK manifold $(M^{2n},g,J,\theta)$, a vector field $X$ preserving the fundamental $2$-form $\omega$, also preserves the Lee form, \emph{i.e.} $\mathcal{L}_X\omega=0$ implies $\mathcal{L}_X\theta=0$, as follows.
As the differential and the Lie derivative with respect to a vector field commute to each other,  \emph{e.g.} by the Cartan formula, we obtain:
\[0=d(\mathcal{L}_X\omega)=\mathcal{L}_X(d\omega)=\mathcal{L}_X(\theta\wedge\omega)=\mathcal{L}_X\theta\wedge\omega+\theta\wedge\mathcal{L}_X \omega=\mathcal{L}_X\theta\wedge\omega.\]
Since the map form $\Omega^1(M)$ to $\Omega^3(M)$ given by wedging with $\omega$ is injective,  for complex dimension $n\geq 2$, it follows that $\mathcal{L}_X\theta=0$.
\end{remark}

We now recall the definition of Vaisman manifolds, which were first introduced and studied by I. Vaisman \cite{A5Vaisman1979}, \cite{A5Vaisman1982}:

\begin{definition}
A {\it Vaisman manifold} is an lcK manifold $(M,g,J,\theta)$ admitting a metric in the conformal class, such that its Lee form is parallel, \emph{i.e.}  $\nabla\theta =0$.  
\end{definition}

Note that on a compact lcK manifold, a metric with parallel Lee
form $\theta$, if it exists, is unique up to homothety in its conformal class and coincides
with the so-called {\it Gauduchon metric}, \emph{i.\,e.} the metric with co-closed Lee form: $\delta\theta=0$. In this paper, we scale any Vaisman metric $g$ such that the norm of its Lee vector field $\theta^\sharp$, which is constant since $\theta$ is parallel, equals $1$.

\begin{definition}
The automorphism group of a Vaisman manifold $(M,g,J,\theta)$ is denoted by a slight abuse of notation
$\mathrm{Aut}(M):=\mathrm{Aut}(M,g,J,\theta)$
and is defined as the group of conformal biholomorphisms:
\[\mathrm{Aut}(M)=\{ F\in \mathrm{Diff}(M)\,|\,  F^{*}J=J, [F^*g]=[g] \}.\]
\end{definition}

We emphasize here that we define the group of automorphisms like for lcK manifolds, namely we do not ask for the automorphisms of a Vaisman manifold to be isometries of the Vaisman metric, but only to preserve its conformal class. Hence, the Lie algebra of $\mathrm{Aut}(M)$ is:
\begin{equation}
\mathfrak{aut}(M)=\{X\in\mathfrak{X}(M)\,|\, \mathcal{L}_X J=0, \mathcal{L}_X g=f g, \text{ for some } f\in\mathcal{C}^\infty(M)\}.
\end{equation}
We denote by $\mathfrak{isom}(M)$  and $\mathfrak{hol}(M)$ the Lie algebras of Killing vector fields with respect to the Vaisman metric $g$, respectively of holomorphic vector fields.

The following lemma collects some known properties of Vaisman manifolds that are used in the sequel.

\begin{lemma}[]\label{A5thJth}
On a Vaisman manifold $(M,J,g,\theta)$, the Lee vector field $\theta^\sharp$ and the anti-Lee vector field $J\theta^\sharp$ are both holomorphic Killing vector fields:
\[\mathcal{L}_{\theta^\sharp} g=0,\quad \mathcal{L}_{\theta^\sharp} J=0, \quad \mathcal{L}_{J\theta^\sharp} g=0, \quad \mathcal{L}_{J\theta^\sharp} J=0,\]
so that $\theta^\sharp, J\theta^\sharp\in\mathfrak{isom}(M)\cap\mathfrak{hol}(M)$.
In particular, it follows that $\mathcal{L}_{\theta^\sharp} \omega=0$, $\mathcal{L}_{J\theta^\sharp} \omega=0$ and $\theta^\sharp$ commutes with $J\theta^\sharp$:
\[[\theta^\sharp, J\theta^\sharp]=J[\theta^\sharp, \theta^\sharp]=0.\]
The Lee vector field $\theta^\sharp$ and the anti-Lee vector field $J\theta^\sharp$  span a $1$-dimensional complex Lie subalgebra of $\mathfrak{isom}(M)\cap\mathfrak{hol}(M)$, which moreover lies in the center of $\mathfrak{isom}(M)\cap\mathfrak{hol}(M)$. Thus, these vector fields give rise to the so-called canonical foliation $\mathcal{F}$ by $1$-dimensional complex tori on a Vaisman manifold, which is a totally
geodesic Riemannian foliation. 
\end{lemma}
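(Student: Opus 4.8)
The plan is to verify the stated properties of $\theta^\sharp$ and $J\theta^\sharp$ by direct computation, relying on the defining Vaisman condition $\nabla\theta=0$ together with the covariant-derivative formula for $J$ recalled in Remark~\ref{A5nablaJ}. First I would establish that $\theta^\sharp$ is Killing. Since $\theta=g(\theta^\sharp,\cdot)$ and $\nabla\theta=0$, we have $\nabla\theta^\sharp=0$; hence $g((\nabla_X\theta^\sharp),Y)+g(X,\nabla_Y\theta^\sharp)=0$, so $(\mathcal{L}_{\theta^\sharp}g)(X,Y)=0$ and $\theta^\sharp\in\mathfrak{isom}(M)$. The holomorphy $\mathcal{L}_{\theta^\sharp}J=0$ I would read off from the identity $\nabla_{\theta^\sharp}J=0$ stated in Remark~\ref{A5nablaJ}: for a Killing field the Lie derivative of $J$ differs from $\nabla_{\theta^\sharp}J$ by terms involving $\nabla\theta^\sharp$, which vanish, so $\mathcal{L}_{\theta^\sharp}J=0$ as well.

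Next I would treat the anti-Lee field $J\theta^\sharp$. Since $\theta^\sharp$ is parallel and $J$ is \emph{not} parallel in general, $J\theta^\sharp$ is not parallel, so its Killing property must come from a genuine computation. I would compute $\nabla_X(J\theta^\sharp)=(\nabla_XJ)\theta^\sharp+J\nabla_X\theta^\sharp=(\nabla_XJ)\theta^\sharp$ and then insert the explicit formula from Remark~\ref{A5nablaJ} with $Y=\theta^\sharp$, namely $2(\nabla_XJ)(\theta^\sharp)=\theta(J\theta^\sharp)X-\theta(\theta^\sharp)JX+g(JX,\theta^\sharp)\theta^\sharp+g(X,\theta^\sharp)J\theta^\sharp$. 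Using $\theta(\theta^\sharp)=|\theta^\sharp|^2=1$, $\theta(J\theta^\sharp)=g(\theta^\sharp,J\theta^\sharp)=0$, and $g(JX,\theta^\sharp)=-g(X,J\theta^\sharp)=-\theta(JX)$, this simplifies to an expression for $\nabla_X(J\theta^\sharp)$ that is manifestly skew-symmetric in the appropriate sense; symmetrizing $g(\nabla_X(J\theta^\sharp),Y)$ in $X,Y$ should yield zero, giving $\mathcal{L}_{J\theta^\sharp}g=0$. The holomorphy $\mathcal{L}_{J\theta^\sharp}J=0$ follows the same way from $\nabla_{J\theta^\sharp}J=0$ in Remark~\ref{A5nablaJ} together with the just-established Killing property.

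With both fields shown to be holomorphic and Killing, the derived statements fall out quickly: $\mathcal{L}_{\theta^\sharp}\omega=0$ and $\mathcal{L}_{J\theta^\sharp}\omega=0$ follow because $\omega=g(\cdot,J\cdot)$ and the Lie derivative distributes over $g$ and $J$, each factor being preserved. For the bracket, I would use the Koszul-type identity: for Killing fields the bracket $[\theta^\sharp,J\theta^\sharp]$ can be expressed through covariant derivatives, and since $\theta^\sharp$ is parallel, $[\theta^\sharp,J\theta^\sharp]=\nabla_{\theta^\sharp}(J\theta^\sharp)-\nabla_{J\theta^\sharp}\theta^\sharp=\nabla_{\theta^\sharp}(J\theta^\sharp)=(\nabla_{\theta^\sharp}J)\theta^\sharp=0$, the last equality by $\nabla_{\theta^\sharp}J=0$; this also matches the displayed chain $[\theta^\sharp,J\theta^\sharp]=J[\theta^\sharp,\theta^\sharp]=0$ once one notes $\mathcal{L}_{\theta^\sharp}J=0$ lets $J$ pass through the bracket. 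Thus $\theta^\sharp,J\theta^\sharp$ span a commutative, and hence $1$-dimensional complex, subalgebra of $\mathfrak{isom}(M)\cap\mathfrak{hol}(M)$.

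The centrality claim is the step I expect to be the main obstacle. To show $\theta^\sharp$ lies in the center, I would take an arbitrary $X\in\mathfrak{isom}(M)\cap\mathfrak{hol}(M)$ and show $[\theta^\sharp,X]=0$; this amounts to showing $\mathcal{L}_X\theta^\sharp=0$, which I would derive from $\mathcal{L}_Xg=0$ and the fact that $\theta^\sharp$ is the metric dual of the parallel, hence $\mathrm{Aut}$-canonical, Lee form — invariance of the metric together with $X$ preserving the conformal structure forces $\mathcal{L}_X\theta=0$ (here one can invoke Remark~\ref{A5omth} after checking $\mathcal{L}_X\omega=0$, which holds since $X$ is a holomorphic Killing field), and dualizing gives $\mathcal{L}_X\theta^\sharp=0$. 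For $J\theta^\sharp$ one then writes $[X,J\theta^\sharp]=(\mathcal{L}_XJ)\theta^\sharp+J[X,\theta^\sharp]=0$ using $\mathcal{L}_XJ=0$ and the previous vanishing. Care is needed because for a general automorphism the metric need not be preserved, only its conformal class; the resolution is that $\mathfrak{isom}(M)\cap\mathfrak{hol}(M)$ consists of genuine Killing fields, so this subtlety does not arise within that intersection. Finally, the foliation statement — that the distribution spanned by $\theta^\sharp,J\theta^\sharp$ is totally geodesic and Riemannian with $1$-dimensional complex tori as leaves — follows from $\nabla\theta^\sharp=0$ (so integral curves of $\theta^\sharp$ are geodesics), the just-computed formula for $\nabla(J\theta^\sharp)$ restricted to the distribution, the commutativity of the two fields giving integrability, and the completeness/compactness of the leaves yielding tori; this is standard and I would cite it to \cite{A5Vaisman1979} rather than reprove it in full.
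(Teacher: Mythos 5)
Your overall route coincides with the paper's: the Killing property of $J\theta^\sharp$ via the formula of Remark~\ref{A5nablaJ} with $Y=\theta^\sharp$ and skew-symmetry of $\nabla(J\theta^\sharp)=(\nabla J)(\theta^\sharp)$, the bracket via $\nabla_{\theta^\sharp}(J\theta^\sharp)-\nabla_{J\theta^\sharp}\theta^\sharp$ and parallelism, and the centrality via Remark~\ref{A5omth} plus the identity $(\mathcal{L}_Xg)(\cdot,\theta^\sharp)=\mathcal{L}_X\theta-g(\cdot,\mathcal{L}_X\theta^\sharp)$. Those parts are correct as written.

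There is, however, one step whose justification does not go through as stated: the holomorphy of $J\theta^\sharp$. You claim it ``follows the same way'' from $\nabla_{J\theta^\sharp}J=0$ together with the Killing property. The general identity is $(\mathcal{L}_VJ)Y=(\nabla_VJ)Y-\nabla_{JY}V+J\nabla_YV$, i.e.\ $\mathcal{L}_VJ=\nabla_VJ+[J,\nabla V]$. For $\theta^\sharp$ the commutator term dies because $\nabla\theta^\sharp=0$; for $J\theta^\sharp$ it does not die for free, and ``$V$ Killing with $\nabla_VJ=0$'' does \emph{not} imply $\mathcal{L}_VJ=0$ in general (on flat $\mathbb{C}^2$ every field satisfies $\nabla_VJ=0$, yet a rotation in a non-$J$-invariant $2$-plane is Killing and not holomorphic). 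To close the gap you must check that the skew endomorphism $\nabla(J\theta^\sharp)=\tfrac12\bigl(-J+\theta^\sharp\wedge J\theta^\sharp\bigr)$ commutes with $J$ --- which it does, by a short computation from the very formula you already wrote down --- or, more cheaply, do what the paper does: invoke the integrability identity $\mathcal{L}_{JX}J=J\circ\mathcal{L}_XJ$, which yields holomorphy of $J\theta^\sharp$ immediately from that of $\theta^\sharp$ with no further computation. Either repair is local and the rest of your argument stands.
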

\begin{proof}

The following relation holds for any integrable complex structure $J$: $\mathcal{L}_{JX}J=J\circ\mathcal{L}_XJ$, for all  vector fields $X$. This shows that a vector field $X$ is holomorphic if and only if $JX$ is holomorphic. In particular, we obtain that $J\theta^\sharp$ is a holomorphic vector field, because $\theta^\sharp$ is holomorphic.
We now show that $J\theta^\sharp$ is a Killing vector field. By the formula in Remark~\ref{A5nablaJ}, we obtain for $Y=\theta^\sharp$ and $X\in\mathfrak{X}(M)$:
 \begin{equation}
  \begin{split}
   2(\nabla_XJ)(\theta^\sharp)&=-\theta(\theta^\sharp)JX+\theta(J\theta^\sharp)X+g(JX,\theta^\sharp)\theta^\sharp+g(X,\theta^\sharp)J\theta^\sharp\\
   &=-JX-g(X,J\theta^\sharp)\theta^\sharp+g(X,\theta^\sharp)J\theta^\sharp.
  \end{split}
 \end{equation}
Hence, it follows that $\nabla J\theta^\sharp=(\nabla J)(\theta^\sharp)$ (since $\theta^\sharp$ is parallel) is skew-symmetric:
 \begin{equation}
  \begin{split}
   2g((\nabla_X J)(\theta^\sharp),Y)&=-g(JX,Y)-g(X,J\theta^\sharp)g(Y,\theta^\sharp)+g(X,\theta^\sharp)g(Y,J\theta^\sharp)\\
   &=g(JY,X)+g(Y,J\theta^\sharp)g(X,\theta^\sharp)-g(Y,\theta^\sharp)g(X,J\theta^\sharp)\\
   &=-2g((\nabla_Y J)(\theta^\sharp),X).
  \end{split}
 \end{equation}
Thus, $J\theta^\sharp$ is a Killing vector field of the Vaisman metric $g$. In particular, it follows:
\[\{\theta^\sharp,J\theta^\sharp\}\subset\mathfrak{isom}(M)\cap\mathfrak{hol}(M).\]
Let $X$ be a holomorphic Killing vector field. Thus, $X$ also preserves the fundamental form $\omega$ and by Remark~\ref{A5omth} it holds: $\mathcal{L}_X\theta=0$. Furthermore, the following general equality
$(\mathcal{L}_X g)(\cdot,\theta^\sharp)=\mathcal{L}_X \theta - g(\cdot, \mathcal{L}_X \theta^\sharp)$
shows that \[[X,\theta^\sharp]=\mathcal{L}_X \theta^\sharp=0.\]
Since $X$ is holomorphic, it also follows that
\[[X,J\theta^\sharp]=J[X,\theta^\sharp]=0.\]
Thus, $\{\theta^\sharp,J\theta^\sharp\}$ is a subset of the center of $\mathfrak{isom}(M)\cap\mathfrak{hol}(M)$.
\end{proof}
 
Recall that the {\it cone} over a Riemannian manifold $(W,g_W)$ is defined as $\mathcal{C}(W):=\mathbb{R}\times W$ with the metric $g_{cone}:=4e^{-2t}(dt^2+ p^*g_W)$, where $t$ is the parameter on $\mathbb{R}$ and $p\colon\mathbb{R}\times W\to W$ is the projection on $W$. We denote the radial flow on the cone by $\phi$, \emph{i.e.} $\phi_s\colon \mathcal{C}(W)\to \mathcal{C}(W)$, $\phi_s(t,w):=(t+s, w)$, for each $s\in \mathbb{R}$. Note that this definition is equivalent, up to a constant factor\footnote{We choose to multiply the metric by the constant factor $4$, so that later on a Vaisman manifold obtained as a quotient of a cone, the Vaisman metric has the property that its Lee vector field is of length $1$ (see Section \ref{A5secregvais}).}, to the more common definition in the literature, namely $(\mathbb{R}_+\times W, dr^2+r^2 p^*g_W)$ via the change of variable $r:= e^{-t}$.

We have the following result:

\begin{proposition}\label{A5coneisom}
For any complete Riemannian manifold $(W,g_W)$, each homothety of the Riemannian cone $(\mathcal{C}(W)=\mathbb{R}\times W, 4e^{-2t}(dt^2+p^*g_W))$ 
is of the form $(t,w)\mapsto (t+\rho, \psi (w))$, where $e^{-2\rho}$ is the  dilatation factor and $\psi$ is an isometry of $(W,g_W)$.  In particular, all the isometries of the cone $\mathcal{C}(W)$ come from isometries of $W$. 
\end{proposition}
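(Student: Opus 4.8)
The plan is to analyze the structure of a homothety $F$ of the cone $\mathcal{C}(W) = \mathbb{R} \times W$ with metric $g_{cone} = 4e^{-2t}(dt^2 + p^*g_W)$ by exploiting the interaction between $F$ and the radial flow $\phi$. The key geometric object is the radial vector field $\partial_t$, which generates $\phi$. My strategy rests on the observation that a homothety must permute geodesics and preserve curvature up to scale, and on a cone the radial direction is distinguished metrically. First I would recall that the cone metric can be written (via $r = e^{-t}$) as $dr^2 + r^2 p^*g_W$ on $\mathbb{R}_+ \times W$, in which form the radial field $r\partial_r = -\partial_t$ is characterized intrinsically as (a multiple of) the gradient of the distance-squared function to the apex, or equivalently via the fact that the position vector field $\xi = r\partial_r$ satisfies $\nabla_X \xi = X$ for the Levi-Civita connection. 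This equation $\nabla_X\xi = X$ is a metric characterization of the radial field that I can use to pin down how $F$ acts.

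\smallskip

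The central step is to show that $F$ preserves the radial distribution, i.e.\ that $F_*\partial_t$ is again vertical (tangent to the $\mathbb{R}$-factor). To do this I would use the defining property of the radial vector field. If $g_{cone}$ is rescaled by a constant $c^2$ under $F$ (that is $F^*g_{cone} = c^2 g_{cone}$), then $F$ maps the Levi-Civita connection to itself, so $F_*\xi$ must again satisfy $\nabla_X(F_*\xi) = X$. Since the solution to $\nabla_X Y = X$ on the cone is unique (the difference of two solutions is a parallel vector field, and one checks a cone admits no nonzero parallel field in this role except by the completeness and cone structure), one concludes $F_*\xi = \xi$ up to the scaling, forcing $F$ to commute appropriately with $\phi$ and to send radial lines to radial lines. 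Consequently, in the $(t,w)$ coordinates, the $\mathbb{R}$-coordinate of $F(t,w)$ depends only on $t$ and the $W$-coordinate depends only on $w$; writing $F(t,w) = (\alpha(t,w), \psi(t,w))$, the preservation of the radial foliation and the product form of the metric force $\alpha(t,w) = t + \rho$ for a constant $\rho$ and $\psi(t,w) = \psi(w)$ independent of $t$.

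\smallskip

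Finally I would substitute this form into the homothety condition $F^*g_{cone} = c^2 g_{cone}$ and read off the constraints. Pulling back $4e^{-2t}(dt^2 + p^*g_W)$ under $(t,w)\mapsto(t+\rho,\psi(w))$ gives $4e^{-2(t+\rho)}(dt^2 + \psi^*g_W)$, which equals $e^{-2\rho}\cdot 4e^{-2t}(dt^2 + \psi^*g_W)$. Matching this against $c^2 g_{cone}$ yields simultaneously that $c^2 = e^{-2\rho}$ (identifying the dilatation factor) and that $\psi^*g_W = g_W$, i.e.\ $\psi$ is an isometry of $(W,g_W)$. The isometry case follows by setting the dilatation factor equal to $1$, giving $\rho = 0$ and hence $F(t,w) = (t,\psi(w))$ with $\psi \in \mathrm{Isom}(W)$.

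\smallskip

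The main obstacle I anticipate is the rigorous verification that $F$ preserves the radial direction; this is where completeness of $(W,g_W)$ is essential and cannot be bypassed by purely local arguments. Concretely, the uniqueness of the radial vector field as the solution of $\nabla_X\xi = X$ (or the characterization of $\partial_t$ via the Hessian of an appropriate potential function) needs the global cone structure to rule out exotic homotheties that mix the radial and $W$-directions. Once that rigidity is established, the remaining computation reducing $F$ to the stated product form and extracting the isometry condition on $\psi$ is a routine matching of metric coefficients.
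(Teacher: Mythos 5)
Your overall architecture matches the paper's: reduce to the coordinates $r=e^{-t}$, show a homothety preserves the radial direction, then read off $\psi^*g_W=g_W$ by matching metric coefficients (that last computation is correct and is essentially what the paper does). The problem is the central step, which you yourself flag as the main obstacle: your proposed mechanism for proving that $F$ preserves the radial direction does not work. You argue that $F_*\xi$ and $\xi$ both solve $\nabla_X Y=X$, so their difference is parallel, and then assert that ``one checks a cone admits no nonzero parallel field.'' This is false in general. Take $W=S^{n-1}$ with the round metric: the cone is flat $\mathbb{R}^n\setminus\{0\}$, which carries an $n$-dimensional space of parallel vector fields, and $\xi+v$ solves $\nabla_X Y=X$ for every constant $v$. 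So uniqueness of the solution fails exactly in a case covered by the proposition. More generally, ruling out parallel fields on a non-flat cone requires Gallot's theorem (the cone over a complete base is either flat or irreducible), a substantial global result you neither cite nor prove; and even with it, the flat case would still need a separate argument. As it stands, your proof would not exclude a homothety of the form ``dilation composed with a translation-like map,'' which is precisely the kind of exotic map that must be excluded.

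The paper closes this gap by a different and more elementary global argument, which is worth comparing with yours. It observes that the metric completion of $(\mathbb{R}_+\times W,\ dr^2+r^2p^*g_W)$ is obtained by adding a single point, and that the incomplete geodesics of the cone are exactly the radial rays $r\mapsto(r,w)$ (here completeness of $W$ enters: it guarantees no other geodesic is incomplete). Since any isometry maps incomplete geodesics to incomplete geodesics, it must permute the rays and hence satisfies $\varphi_*\bigl(\tfrac{\partial}{\partial r}\bigr)=\tfrac{\partial}{\partial r}$; orthogonality then forces the product form $(r,w)\mapsto(r,\psi(w))$, and homotheties are handled by composing with $(r,w)\mapsto(e^{-\rho}r,w)$. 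In the flat example above this argument correctly singles out the linear isometries (translations move the puncture and so do not preserve $\mathbb{R}^n\setminus\{0\}$), whereas your parallel-field argument cannot see this. To repair your proof you would either need to substitute the incomplete-geodesic argument for your uniqueness claim, or invoke Gallot's irreducibility theorem and treat the flat case separately.
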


Proposition~\ref{A5coneisom} is proved in \cite{A5GOPP2006} for the compact case, \emph{i.e.} when $(W, g_W)$ is a compact Riemannian manifold. In \cite{A5BM2015}, F.~Belgun and A.~Moroianu extended this result to the larger class of so-called cone-like manifolds. For the complete case,  Proposition~\ref{A5coneisom} can be proven as follows, after making, for convenience, the coordinate change $r=2e^{-t}$. It is known that the metric completion of the cone $(\mathbb{R}_+\times W, dr^2+r^2 p^*g_W)$ is a metric space obtained by adding a single point. It can further be showed that the incomplete geodesics of the cone are exactly its rays, $r\mapsto (r,w)$, for any fixed $w\in W$. Since the image of an incomplete geodesic through any isometry is again an incomplete geodesic, it follows that each isometry $\varphi$ of the cone preserves the radial vector field, \emph{i.e.} $\varphi_* \left(\frac{\partial}{\partial r}\right)=\frac{\partial}{\partial r}$. For any vector field $X$ tangent to $W$, $\varphi_* (X)$ is also tangent to $W$, since it is orthogonal to $\varphi_* \left(\frac{\partial}{\partial r}\right)$ and $\varphi_* \left(\frac{\partial}{\partial r}\right)=\frac{\partial}{\partial r}$. Altogether, this implies that $\varphi$ has the following form: $(r,w)\mapsto (r, \psi(w))$, with $\psi$ an isometry of $W$. The statement for the homotheties then also follows, by composing for instance any isometry with the following homothety of the cone: $(r,w)\mapsto (e^{-\rho} r, w)$, for some $\rho\in\mathbb{R}$.

\begin{definition}
A {\it Sasaki structure} on a Riemannian manifold $(W,g_W)$ is a complex structure $J$ on its cone $\mathcal{C}(W)$, such that $(g_{cone}, J)$ is K\"ahler and for all $\lambda\in\mathbb{R}$, the homotheties $\rho_{\lambda} \colon \mathcal{C}(W)\to \mathcal{C}(W)$, $\rho_\lambda(t,w):=(t+\lambda, w)$ are holomorphic.
\end{definition}

There are many equivalent definitions of a Sasaki structure, see \emph{e.g.} the monography \cite{A5BG2008}. In particular, a Sasaki manifold $W^{2n+1}$ is endowed with a metric $g_W$ and  a contact $1$-form $\eta$ (\emph{i.e.} $\eta\wedge(\dd \eta)^n\neq 0$), whose Reeb vector field $\xi$, which is defined by the equations $\eta(\xi)=1$ and $\iota_\xi \dd \eta=0$, is Killing. The restriction of the endomorphism $\Phi:=\nabla^{g_W} \xi$ to the contact distribution $\ker(\eta)$ defines an integrable $CR$-structure. When needed, we may write a Sasaki structure by denoting all these data, as $(W, g_W, \xi, \eta, \Phi)$. We recall also that an automorphism of a Sasaki manifold is an isometry which preserves the Reeb vector field (see \emph{e.g.} \cite[Lemma~8.1.15]{A5BG2008}).  

\begin{definition}
A {\it toric Sasaki manifold} is a  Sasaki manifold $(W^{2m+1},g_W,\xi)$ 
endowed with an effective action of the torus
$T^{m+1}$, which
preserves the Sasaki structure and such that
$\xi$ is an element of the Lie algebra $\mathfrak{t}_{m+1}$ of the torus.
Equivalently, a toric Sasaki manifold is a Sasaki manifold
whose K\"ahler cone is a toric K\"ahler manifold.
\end{definition}

\noindent Thus, toric Sasaki manifolds are in particular contact toric manifolds of Reeb type.  E.~Lerman classified compact connected contact toric manifolds, see \cite[Theorem 2.18]{A5L2003}.\\

One way to construct Vaisman manifolds, \emph{cf.} R.~Gini, L.~Ornea, M.~Parton, \cite[Proposition 7.3]{A5GOP2005}, is the following:
\begin{proposition}
Let $W$ be a Sasaki manifold and $\Gamma$ be a group of biholomorphic homotheties of the K\"ahler cone $\mathcal{C}(W)$, acting freely and properly discontinuously on $\mathcal{C}(W)$, such that $\Gamma$ commutes with the radial flow generated by $\frac{\partial}{\partial t}$. Then $M:=\mathcal{C}(W)/\Gamma$ has a naturally induced Vaisman structure.
\end{proposition}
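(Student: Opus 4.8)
The plan is to descend the cone geometry to $M$ after replacing the K\"ahler cone metric $g_{cone}$ by a $\Gamma$-invariant metric in its conformal class, namely the cylindrical product metric obtained via the conformal factor $e^{2t}$; since a Riemannian product carries a parallel length-one factor, the associated Lee form is automatically parallel, which is exactly the Vaisman condition. The essential preliminary is to pin down the shape of the elements of $\Gamma$. Let $\gamma\in\Gamma$, so $\gamma^*g_{cone}=c_\gamma g_{cone}$ for a constant $c_\gamma>0$, and $\gamma$ commutes with the radial flow $\phi_s(t,w)=(t+s,w)$. Commutation forces $\gamma_*\frac{\partial}{\partial t}=\frac{\partial}{\partial t}$; evaluating the homothety relation on $\frac{\partial}{\partial t}$ then gives $e^{-2(t\circ\gamma)}=c_\gamma e^{-2t}$, hence $t\circ\gamma=t+\rho_\gamma$ with $c_\gamma=e^{-2\rho_\gamma}$. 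Commutation with $\phi_s$ additionally forces the $W$-component of $\gamma$ to be independent of $t$, so $\gamma(t,w)=(t+\rho_\gamma,\psi_\gamma(w))$; restricting $\gamma^*g_{cone}=c_\gamma g_{cone}$ to vectors tangent to $W$ yields $\psi_\gamma^*g_W=g_W$, i.e. $\psi_\gamma\in\mathrm{Isom}(W,g_W)$. This reproves the relevant part of Proposition~\ref{A5coneisom} directly from the hypotheses, without assuming $W$ complete.

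Next I would set $g:=e^{2t}g_{cone}=4(dt^2+p^*g_W)$ and $\theta:=2\,dt$ on $\mathcal{C}(W)$. By the previous step, $g$, $\theta$ and $J$ are all $\Gamma$-invariant: $J$ because $\Gamma$ acts by biholomorphisms, and $g$, $\theta$ because $\gamma^*dt=dt$ and $\gamma^*(p^*g_W)=p^*(\psi_\gamma^*g_W)=p^*g_W$. As $\Gamma$ acts freely and properly discontinuously, the quotient map $\mathcal{C}(W)\to M$ is a covering, so the triple $(g,J,\theta)$ descends to a triple $(\bar g,\bar J,\bar\theta)$ on $M$.

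It then remains to check that $(\bar g,\bar J,\bar\theta)$ is Vaisman, and since all the relevant identities are local it suffices to verify them upstairs on $\mathcal{C}(W)$. Writing $\Omega=g_{cone}(\cdot,J\cdot)$ for the closed K\"ahler form, the fundamental form of $g$ is $\omega=g(\cdot,J\cdot)=e^{2t}\Omega$, whence $d\omega=2\,dt\wedge\omega=\theta\wedge\omega$ with $\theta$ closed; thus $(g,J,\theta)$ is lcK with Lee form $\theta$ (uniqueness of the Lee form for complex dimension $\geq 2$ being as in Remark~\ref{A5omth}). Finally $g=4(dt^2+p^*g_W)$ is a Riemannian product metric on $\mathbb{R}\times W$, for which $\frac{\partial}{\partial t}$, and hence $dt$, is parallel; therefore $\nabla\theta=0$ and the induced structure is Vaisman. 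One also checks $|\theta^\sharp|_g=1$, consistent with the normalization fixed by the factor $4$.

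The main obstacle is the first step: one must extract the rigid product form $\gamma(t,w)=(t+\rho_\gamma,\psi_\gamma(w))$ of the automorphisms from the two hypotheses, homothety and commutation with the radial flow. Once this rigidity is in hand, the $\Gamma$-invariance of the cylindrical metric $g$ and of $\theta=2\,dt$ is immediate, and the Vaisman property reduces to the product structure of $g$ together with the elementary conformal computation $d\omega=\theta\wedge\omega$.
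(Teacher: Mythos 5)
Your proposal is correct, but there is nothing in the paper to compare it against line by line: the paper states this proposition without proof, quoting it from Gini--Ornea--Parton \cite[Proposition 7.3]{A5GOP2005}. Your argument is the natural self-contained one, and it is complete. Its three steps all check out: (i) the rigidity step, where you derive $\gamma(t,w)=(t+\rho_\gamma,\psi_\gamma(w))$ with $\psi_\gamma\in\mathrm{Isom}(W,g_W)$ and $c_\gamma=e^{-2\rho_\gamma}$ directly from the homothety relation together with $\gamma_*\frac{\partial}{\partial t}=\frac{\partial}{\partial t}$; note that this is a genuine economy over invoking Proposition~\ref{A5coneisom}, which would require $(W,g_W)$ to be complete, whereas the commutation hypothesis in the proposition makes the rigidity elementary and completeness-free; (ii) the invariance step, where $g=4(\mathrm{d}t^2+p^*g_W)$, $\theta=2\,\mathrm{d}t$ and $J$ are $\Gamma$-invariant and hence descend through the covering $\mathcal{C}(W)\to M$; (iii) the verification step, where $d\omega=\theta\wedge\omega$ follows from $\omega=e^{2t}\Omega$ with $\Omega$ closed, and $\nabla\theta=0$ follows from the product structure of $g$ --- both being local identities, they pass to the quotient, and your normalization $|\theta^\sharp|_g=1$ matches the convention fixed by the factor $4$ in the paper's definition of the cone metric. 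It is also worth observing that the facts you establish (that $2\,\mathrm{d}t$ projects to the Lee form and $4(\mathrm{d}t^2+p^*g_W)$ to the Vaisman metric) are exactly the ones the paper later uses without further comment in the proof of Theorem~\ref{A5sasvais}, so your proof documents that step as well.
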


Conversely, it is implicitly proved in the work \cite{A5Vaisman1979} of I.~Vaisman that the universal covering of a Vaisman manifold, endowed with the K\"ahler metric, is a cone over a Sasaki manifold. R.~Gini, L.~Ornea, M.~Parton and P.~Piccinni showed in \cite{A5GOPP2006} that this result is true for any presentation of a Vaisman manifold. Let us recall that a \emph{presentation} of a locally conformally K\"ahler manifold is a pair $(K,\Gamma)$, where $K$ is a homothetic K\"ahler manifold, \emph{i.e.} the K\"ahler metric is defined up to homotheties, and $\Gamma$ is a discrete group of biholomorphic homotheties acting freely and properly discontinuously on $K$.  To any presentation, there is a group homomorphism, which associates to each homothety, its dilatation factor, $\rho_K\colon\Gamma\to \mathbb{R}^+$, such that $\gamma^*(g_K)=\rho_K(\gamma)g_K$, for any $\gamma\in\Gamma$, where $g_K$ is the K\"ahler metric (up to homothety) on $K$. The \emph{maximal presentation} is the universal covering $(\widetilde M, \pi_1(M))$ of the lcK manifold $M$ and the \emph{minimal presentation} or \emph{minimal covering}  is given by 
$$(\widehat M:=\widetilde M/(\mathrm{Isom}(\widetilde M, g_K)\cap \pi_1(M)),\Gamma_{\mathrm{min}}:= \pi_1(M)/(\mathrm{Isom}(\widetilde M, g_K)\cap \pi_1(M)).$$ Hence, each $\gamma\in\Gamma_{\mathrm{min}}\setminus\{\mathrm{id}\}$ acts as a proper homothety on $(\widehat M, g_K)$, \emph{i.e.} $\gamma$ is not an isometry: $\rho_{\widehat{M}}(\gamma)\neq 1$. We can now state the following consequence of \cite[Theorem 4.2]{A5GOPP2006}:

\begin{proposition} \label{A5univcover}
The minimal (resp. universal) covering of a Vaisman manifold $(M,g,J,\theta)$ is biholomorphic and conformal to the K\"ahler cone of a Sasaki manifold and $\Gamma_{\mathrm{min}}$ (resp. $\pi_1(M)$) acts on the cone by biholomorphic homotheties with respect to the K\"ahler cone metric.
\end{proposition}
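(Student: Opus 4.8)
The plan is to pass to the universal covering, produce a global Kähler metric by the conformal change dictated by the Lee form, and then use the Vaisman condition $\nabla\theta=0$ to recognise this Kähler metric as a cone metric; the same structure then descends to the minimal covering.

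First I would lift everything to the universal covering $\widetilde M$. Since $\tilde\theta$ is closed and $\widetilde M$ is simply connected, write $\tilde\theta=d\varphi$, where $\tilde\theta^\sharp=\mathrm{grad}_{\tilde g}\varphi$ has unit $\tilde g$-norm by our scaling of $g$. Set $g_K:=e^{-\varphi}\tilde g$ with fundamental form $\Omega:=e^{-\varphi}\tilde\omega$; a one-line computation using $d\tilde\omega=\tilde\theta\wedge\tilde\omega$ gives $d\Omega=0$, so $(\widetilde M,g_K,J)$ is Kähler. For $\gamma\in\pi_1(M)$ one has $\gamma^*\tilde\theta=\tilde\theta$, hence $\varphi\circ\gamma=\varphi+c_\gamma$ for a constant $c_\gamma$, and therefore $\gamma^*g_K=e^{-c_\gamma}g_K$ while $\gamma^*J=J$; thus $\pi_1(M)$ acts by biholomorphic homotheties, giving the maximal presentation.

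The heart of the argument is to identify $(\widetilde M,g_K)$ with a Riemannian cone, and here the Vaisman hypothesis enters decisively. Since $\theta$ is parallel for $g$, its lift $\tilde\theta^\sharp$ is parallel for $\nabla$ and of unit length, so $\widetilde M$ splits locally as $d\varphi^2+\hat g$ with $\hat g$ ($\varphi$-independent, since the leaves of $(\tilde\theta^\sharp)^\perp$ are isometric under the parallel flow) the metric on a level set $W=\varphi^{-1}(0)$. Substituting and putting $t:=\varphi/2$ yields exactly $g_K=4e^{-2t}(dt^2+p^*g_W)$ with $g_W:=\tfrac14\hat g|_W$, i.e. the cone metric of the paper's convention. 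Equivalently --- and this is the computation I would actually carry out --- feeding $\nabla\tilde\theta^\sharp=0$ into the conformal change-of-connection formula shows that $E:=-2\tilde\theta^\sharp$ satisfies $D_XE=X$ for the Levi-Civita connection $D$ of $g_K$, the defining property of the Euler field of a cone; note $|E|^2_{g_K}=4e^{-\varphi}$, so $r:=|E|=2e^{-t}$ is the cone radius and $E=r\partial_r$. The global identification $\widetilde M\cong\mathbb R\times W$ is furnished by the flow of $\tilde\theta^\sharp$ (this is where completeness, which makes the Killing field $\theta^\sharp$ of Lemma~\ref{A5thJth} complete, is used; in general it is the content of \cite[Theorem 4.2]{A5GOPP2006}).

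It then remains to see that $W$ is Sasaki and to treat the minimal covering. By Lemma~\ref{A5thJth} the lift of $\theta^\sharp$ is holomorphic, $\mathcal L_{\tilde\theta^\sharp}J=0$, so the radial flow $\rho_\lambda(t,w)=(t+\lambda,w)$ --- generated by a multiple of $\tilde\theta^\sharp$ --- is holomorphic; together with $(g_K,J)$ Kähler this is precisely the definition of a Sasaki structure on $W$, so $\widetilde M=\mathcal C(W)$, and the identity map is a biholomorphism with $\tilde g=e^{\varphi}g_K$, a conformal equivalence, proving the claim for the universal covering. For the minimal covering $\widehat M=\widetilde M/N$ with $N=\mathrm{Isom}(\widetilde M,g_K)\cap\pi_1(M)$, each element of $N$ has $c_\gamma=0$, hence preserves $\varphi$, the splitting and the Sasaki data; by Proposition~\ref{A5coneisom} it is induced by a Sasaki automorphism of $W$, so $N$ acts freely and properly discontinuously on $W$ and $\widehat M=\mathcal C(W/N)$ is the cone over the Sasaki manifold $W/N$, with $\Gamma_{\mathrm{min}}=\pi_1(M)/N$ acting by biholomorphic homotheties. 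The main obstacle is exactly the global step: upgrading the local cone splitting coming from the parallel unit field to a genuine global product $\mathbb R\times W$ over a single Sasaki cross-section, and checking that $W/N$ is a manifold --- which is what completeness, or \cite[Theorem 4.2]{A5GOPP2006}, provides.
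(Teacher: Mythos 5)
Your argument is correct, but it is genuinely a proof where the paper offers none: Proposition~\ref{A5univcover} is stated there as a direct consequence of \cite[Theorem 4.2]{A5GOPP2006} (going back to \cite{A5Vaisman1979}), with no argument given. What you supply is a clean self-contained derivation, and the two nontrivial ingredients are exactly right: (i) the de Rham--type global splitting $\widetilde M\cong\mathbb{R}\times W$, $\tilde g=d\varphi^2+\hat g$, coming from the parallel unit field $\tilde\theta^\sharp=\mathrm{grad}_{\tilde g}\varphi$ on the simply connected cover, after which the substitution $t=\varphi/2$ literally produces the paper's normalisation $g_K=4e^{-2t}(dt^2+p^*g_W)$ with $g_W=\tfrac14\hat g$; and (ii) the verification, via the conformal change-of-connection formula, that $E=-2\tilde\theta^\sharp$ satisfies $D_XE=X$, which identifies $E$ as the Euler field $r\partial_r$ with $r=2e^{-t}$ (this computation does check out, using $|\tilde\theta^\sharp|_{\tilde g}=1$). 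Your identification of the Sasaki structure matches the paper's own definition (Kähler cone plus holomorphic radial flow, the latter from $\mathcal L_{\theta^\sharp}J=0$ of Lemma~\ref{A5thJth} and $\partial_t=2\tilde\theta^\sharp$), and the descent to $\widehat M=\mathcal C(W/N)$ via Proposition~\ref{A5coneisom} is the right mechanism. The one caveat you correctly flag deserves emphasis: the global splitting requires completeness of $(M,g)$ (so that $\tilde\theta^\sharp$ is a complete parallel field), a hypothesis absent from the bare statement of the proposition but present wherever the paper actually invokes it (Lemma~\ref{A5rtw}, Theorem~\ref{A5vaissas}); without it only the local cone structure survives, which is why the paper defers to \cite{A5GOPP2006}. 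What your route buys is transparency about exactly where the Vaisman condition and completeness enter; what the citation buys is validity for an arbitrary presentation without redoing the splitting argument.
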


\begin{remark}\label{A5thminexact}
On the minimal covering $\widehat M$ of an lcK manifold $(M,g,J,\theta)$, the pull-back $\hat{\theta}$ of the Lee form is exact. This property is clearly true on the universal covering $\widetilde M$, since the pull-back of $\theta$ is still closed, hence exact: $\tilde\theta=\mathrm{d} f$, as $\widetilde M$ is simply-connected. The minimal covering is obtained from the universal covering by quotiening out the isometries (of the K\"ahler metric $e^{-f}\tilde g$ on $\widetilde M$) in $\pi_1(M)$. Therefore, the function $f$ projects onto a function $\hat f\in\mathcal{C}^\infty(\widehat M)$, such that $\hat\theta=\mathrm{d} \hat f$. 
\end{remark}

\section{Twisted Hamiltonian Actions on lcK manifolds}

Let $(M,g,J,\theta)$ be an lcK manifold. We consider $\mathrm{d}^\theta$, the so-called \emph{twisted differential}, defined by:
\[\mathrm{d}^\theta\colon \Omega^*(M)\longrightarrow \Omega^{*+1}(M), \quad \mathrm{d}^\theta \alpha:=d\alpha-\theta\wedge\alpha.\]
Remark that $\mathrm{d}^\theta\circ \mathrm{d}^\theta=0$ if and only if $\mathrm{d}\theta=0$ and that in this case $\mathrm{d}^\theta$ anti-commutes with $d$. By definition, on an lcK manifold, we have $\mathrm{d}^\theta\omega=0$.

Let $\mathcal{L}^\theta$ denote the \emph{twisted Lie derivative} defined by
$\mathcal{L}^\theta_X:=\mathrm{d}^\theta\circ\iota_X+\iota_X\circ \mathrm{d}^\theta$.
The following relation holds, for any $X\in\mathfrak{X}(M)$:
\begin{equation}\label{A5lietheta}
\mathcal{L}^\theta_X\omega=\mathcal{L}_X\omega-\theta(X)\omega,
\end{equation}
as follows from the following direct computation:
\begin{equation*}
 \begin{split}
  \mathcal{L}_{X}\omega&=d\iota_{X}\omega+\iota_{X}d\omega= d\iota_{X}\omega+\iota_{X}(\theta\wedge \omega)\\
  &=d\iota_{X}\omega+\theta(X)\omega-\theta\wedge\iota_{X}\omega= \mathrm{d}^\theta(\iota_{X}\omega)+\theta(X)\omega\\
  &=\mathcal{L}^\theta_{X}\omega+\theta(X)\omega.
 \end{split}
\end{equation*}

\begin{definition}
Given a function $f\in C^\infty(M)$ on an lcK manifold $(M,g,J,\theta)$,  its \emph{associated twisted Hamiltonian vector field} $X_f$ is defined as the $\omega$-dual of $\mathrm{d}^\theta f=d f -f\theta$, \emph{i.\,e.} $\iota_{X_f}\omega=\mathrm{d}^\theta f$.
The subset $\mathrm{Ham}^\theta(M)\subset\mathfrak{X}(M)$ of \emph{twisted Hamiltonian vector fields} is that of vector fields on $M$ admitting such a presentation.
\end{definition}

We remark  that the notion of twisted Hamiltonian vector field is invariant under conformal changes of the metric, even though the function associated to a twisted Hamiltonian vector field changes by the conformal factor. More precisely, if $g'=e^{\alpha}g$, then $\omega'=e^\alpha\omega$, $\theta'=\theta+d\alpha$ and the following relation holds $
 d^{\theta'} f=e^{\alpha}\mathrm{d}^{\theta}(e^{-\alpha}f)$. Note that if $M$ is not globally conformally K\"ahler, the map $C^\infty(M) \to \mathfrak{X}(M)$, $f\mapsto X_f$ is injective.

\begin{lemma} \label{A5twistedham}
Twisted Hamiltonian vector fields on an lcK manifold $(M,g,J,\theta)$ have the following properties: 
\begin{itemize}
\item[(i)] $\forall X\in\mathrm{Ham}^\theta(M)$ the
following relations hold:
\begin{equation*}
\mathcal{L}^\theta_X \omega=\mathrm{d}^\theta \iota_{X}\omega=0.
\end{equation*}
\item[(ii)] $\mathrm{Ham}^\theta(M)$ is a vector subspace of $\mathfrak{X}(M)$, such that
$J\theta^\sharp\in\mathrm{Ham}^\theta(M)$ and $\theta^\sharp\notin \mathrm{Ham}^\theta(M)$.
\item[(iii)] Twisted Hamiltonian vector fields
do not leave invariant the fundamental form $\omega$, but conformally invariant, i.e. for all $X\in \mathrm{Ham}^\theta(M)$:
\begin{equation}\label{A5tH}
  \mathcal{L}_{X}\omega=\theta(X)\omega.
\end{equation}
\end{itemize}
\end{lemma}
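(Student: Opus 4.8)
The plan is to handle the three parts in the order (i), (iii), (ii), since (iii) is an immediate corollary of (i) and the only delicate point is the exclusion of $\theta^\sharp$ in (ii). For (i), I would start from the definition: if $X\in\mathrm{Ham}^\theta(M)$, then $\iota_X\omega=\mathrm{d}^\theta f$ for some $f\in C^\infty(M)$. Since $\mathrm{d}\theta=0$ we have $\mathrm{d}^\theta\circ\mathrm{d}^\theta=0$, whence $\mathrm{d}^\theta\iota_X\omega=\mathrm{d}^\theta\mathrm{d}^\theta f=0$. Feeding this into the definition $\mathcal{L}^\theta_X=\mathrm{d}^\theta\circ\iota_X+\iota_X\circ\mathrm{d}^\theta$ and using the lcK identity $\mathrm{d}^\theta\omega=0$ gives $\mathcal{L}^\theta_X\omega=\mathrm{d}^\theta\iota_X\omega+\iota_X\mathrm{d}^\theta\omega=\mathrm{d}^\theta\iota_X\omega=0$, which is exactly both equalities of (i).

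For (iii), I would combine (i) with the general relation \eqref{A5lietheta}, namely $\mathcal{L}^\theta_X\omega=\mathcal{L}_X\omega-\theta(X)\omega$. As the left-hand side vanishes by (i), this immediately yields $\mathcal{L}_X\omega=\theta(X)\omega$, so $X$ preserves $\omega$ only conformally, with conformal factor $\theta(X)$.

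For (ii), the subspace property follows from linearity: the assignment $f\mapsto X_f$ is $\mathbb{R}$-linear because $\mathrm{d}^\theta$ is linear and $\omega$ is nondegenerate, so $\mathrm{Ham}^\theta(M)$, being its image, is a linear subspace of $\mathfrak{X}(M)$. To place $J\theta^\sharp$ inside it, I would compute $\iota_{J\theta^\sharp}\omega$ directly: using $\omega=g(\cdot,J\cdot)$ and that $J$ is a $g$-isometry, $\iota_{J\theta^\sharp}\omega(Y)=g(J\theta^\sharp,JY)=g(\theta^\sharp,Y)=\theta(Y)$, so $\iota_{J\theta^\sharp}\omega=\theta=\mathrm{d}^\theta(-1)$ and therefore $J\theta^\sharp=X_{-1}$.

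The main obstacle is the claim $\theta^\sharp\notin\mathrm{Ham}^\theta(M)$, and this is where a genuinely global argument must enter. A pointwise computation gives $\iota_{\theta^\sharp}\omega(Y)=g(\theta^\sharp,JY)=-g(J\theta^\sharp,Y)$, i.e. $\iota_{\theta^\sharp}\omega=-(J\theta^\sharp)^\flat=\theta\circ J$ is the anti-Lee form, and there is no formal reason for this to be $\mathrm{d}^\theta$-exact; indeed the analogous manipulations as for $J\theta^\sharp$ remain consistent, so the symmetry between $\theta^\sharp$ and $J\theta^\sharp$ must be broken by a quantitative, rather than formal, input. To do so I would argue by contradiction on a compact $M$: if $\theta^\sharp=X_f$, then (iii) forces $\mathcal{L}_{\theta^\sharp}\omega=\theta(\theta^\sharp)\omega=|\theta^\sharp|^2\omega$. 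Wedging with $\omega^{n-1}$ and using $\mathcal{L}_{\theta^\sharp}\omega\wedge\omega^{n-1}=\tfrac1n\mathcal{L}_{\theta^\sharp}\omega^n=\tfrac1n\,\mathrm{d}\,\iota_{\theta^\sharp}\omega^n$ (the last step because $\omega^n$ is top degree), Stokes' theorem gives $\int_M\mathcal{L}_{\theta^\sharp}\omega\wedge\omega^{n-1}=0$, while the right-hand side integrates to $\int_M|\theta^\sharp|^2\omega^n>0$ as soon as $\theta\not\equiv0$, i.e. whenever the structure is not globally Kähler. This contradiction gives $\theta^\sharp\notin\mathrm{Ham}^\theta(M)$. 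Geometrically this is precisely the fact that the radial (Euler) field of the Kähler cone underlying a Vaisman structure satisfies $\mathcal{L}\omega=2\omega\neq0$ and is thus not Hamiltonian; the integration above is its intrinsic, compact reformulation, and I expect extracting this nonvanishing — rather than another formal identity — to be the crux of the argument.
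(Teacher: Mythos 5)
Your treatment of (i), (iii) and the positive half of (ii) coincides with the paper's: (i) is the Cartan formula for $\mathcal{L}^\theta$ together with $(\mathrm{d}^\theta)^2=0$ (valid since $\mathrm{d}\theta=0$) and $\mathrm{d}^\theta\omega=0$; (iii) is (i) combined with \eqref{A5lietheta}; and the identification $\iota_{J\theta^\sharp}\omega=\theta=\mathrm{d}^\theta(-1)$ is exactly the computation in the paper.

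The divergence is in the claim $\theta^\sharp\notin\mathrm{Ham}^\theta(M)$, and here your proposal has a genuine problem. Your guiding assertion --- that the asymmetry between $\theta^\sharp$ and $J\theta^\sharp$ cannot be detected formally and must come from a quantitative, global input --- is incorrect: a $\mathrm{d}^\theta$-exact form is in particular $\mathrm{d}^\theta$-closed, and the paper checks pointwise that $\iota_{\theta^\sharp}\omega$ fails even this necessary condition:
\[
\mathrm{d}^\theta\iota_{\theta^\sharp}\omega=\mathcal{L}^\theta_{\theta^\sharp}\omega-\iota_{\theta^\sharp}\mathrm{d}^\theta\omega=\mathcal{L}_{\theta^\sharp}\omega-\theta(\theta^\sharp)\,\omega=-\omega\neq0,
\]
using $\mathcal{L}_{\theta^\sharp}\omega=0$ and $|\theta^\sharp|=1$. (These are the properties of the Lee field from Lemma~\ref{A5thJth} together with the paper's standing normalization of the Vaisman metric; they are what breaks the symmetry, since for $J\theta^\sharp$ the same computation gives $\mathcal{L}_{J\theta^\sharp}\omega-\theta(J\theta^\sharp)\omega=0$. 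So this part of the lemma should be read in the normalized Vaisman setting in which the paper applies it, but within that setting the obstruction is local.) Your substitute argument by integration is correct as far as it goes, but it requires $M$ compact (for Stokes) and $\theta\not\equiv 0$, neither of which is a hypothesis of the lemma; as written it therefore establishes the exclusion of $\theta^\sharp$ only in a strictly smaller generality than claimed, and it misses the simpler pointwise proof.
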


\begin{proof}
(i) follows directly by the Cartan formula.\\
(ii) This can be seen as follows:
 \[(\iota_{J\theta^\sharp}\omega)(X)=\omega(J\theta^\sharp,X)=g(J\theta^\sharp,JX)=\theta(X),\]
 hence $\iota_{J\theta^\sharp}\omega=\theta=\mathrm{d}^\theta f$, where $f$ is the constant function equal to $-1$.
 
However, the Lee vector field $\theta^\sharp$ is not twisted Hamiltonian, since the $1$-form $\iota_{\theta^\sharp}\omega$ is not even $\mathrm{d}^\theta$-closed:
 \[\mathrm{d}^\theta \iota_{\theta^\sharp}\omega=\mathcal{L}^{\theta}_{\theta^\sharp}\omega-\iota_{\theta^\sharp}\mathrm{d}^\theta\omega=\mathcal{L}_{\theta^\sharp}\omega-\theta(\theta^\sharp)\omega=-\omega\neq 0\]
(iii) follows from \eqref{A5lietheta}.
\end{proof}

The \emph{twisted Poisson bracket} on $C^\infty(M)$ is defined by:
\begin{equation}\label{A5Pb}
\{f_1, f_2\}:=\omega((\mathrm{d}^\theta f_1)^\sharp, (\mathrm{d}^\theta f_2)^\sharp)=\omega(X_{f_1}, X_{f_2}).
\end{equation}
and it turns $C^\infty(M)$ into a Lie algebra. Hence, the following equality holds: $X_f=-J((\mathrm{d}^\theta f)^{\sharp})$.

Recall that for any action of a Lie group $G$ on a manifold $M$, and for any $X$ in the Lie algebra of $G$, it is naturally associated the so-called {\it fundamental vector field} $X_M$, defined as $X_M(x)=\frac{d}{dt}|_{t=0}(\exp(tX)\cdot x)$, for any $x\in M$, where $\exp\colon \mathfrak{g}\to G$ is the exponential map of the group $G$ and $"\cdot"$ denotes the action of $G$ on $M$.
We identify the elements of the Lie algebra with the induced fundamental vector fields, when  there is no ambiguity.

\begin{definition}
 Let $(M,g,J,\theta)$ be a  locally conformally K\"ahler manifold with fundamental $2$-form $\omega$. The action of a Lie group $G$ on $M$ is called 
 \begin{itemize}
  \item \emph{weakly twisted Hamiltonian} if the associated fundamental vector fields are twisted Hamiltonian, \emph{i.\,e.} there exists a linear map 
 \[\mu:\mathfrak{g}\to C^\infty(M)\]
 such that $\iota_X \omega=\mathrm{d}^\theta \mu^X$, for all fundamental vector fields $X\in\mathfrak{g}$. This means that the  twisted Hamiltonian vector field associated to $\mu^X:=\mu(X)$ is exactly the vector field $X$. The condition here is that all fundamental vector fields are twisted Hamiltonian: $\mathfrak{g}\subseteq \mathrm{Ham}^\theta(M)$.
 \item \emph{twisted Hamiltonian} if the map $\mu$ can be chosen to be a Lie algebra homomorphism with respect to the Poisson bracket defined in \eqref{A5Pb}.\\
 In this case the Lie algebra homomorphism $\mu$ is called a \emph{momentum map} for the action of $G$.
  \end{itemize}
The map $\mu$ may  equivalently be considered as the map:
 \[\mu\colon M\to \mathfrak{g}^*, \quad \<\mu(x),X\>:=\mu^X(x), \;\forall X\in\mathfrak{g}, \forall x\in M.\]

\end{definition}

\noindent The condition on $\mu\colon \mathfrak{g}\to C^\infty(M)$ to be a homomorphism of Lie algebras is equivalent to $\mu\colon M\to \mathfrak{g}^*$ being equivariant with respect to the adjoint action on $\mathfrak{g}^*$, the dual of the Lie algebra of $G$.

\begin{remark}
The property of an action to be twisted Hamiltonian is a property of the conformal structure, even though the Poisson structure on $\mathcal{C}^\infty(M)$ is not conformally invariant. If $g'=e^{\alpha}g$, then $\mu^{\theta'}=e^{\alpha}\mu^{\theta}$.
\end{remark}

We define toric lcK manifolds by analogy  to other toric geometries, as follows:

\begin{definition}\label{A5deftoriclck}
 A connected locally conformally K\"ahler manifold $(M,[g],J)$ of dimension $2n$  equipped with an effective holomorphic and twisted Hamiltonian  action of the standard (real) $n$-dimensional torus $T^{n}$:
 \[\tau: T^{n}\to \mathrm{Diff}(M)\]
 is called a \emph{toric locally conformally K\"ahler manifold}.
\end{definition}

\begin{remark}
Let $(M,g,J,\theta)$ be a Vaisman manifold. Let $\pi:\widetilde{M}\to M$ be the universal covering of $M$. We still denote by $J$ the induced complex structure on $\widetilde{M}$. 
Since $\pi^*\theta$ is closed and $\widetilde M$ simply-connected, it follows that it is also exact, \emph{i.e.} there exists $h\in\mathcal{C}^{\infty}(\widetilde M)$ such that $\pi^*\theta=d h$. We define the following metric and associated fundamental form on $\widetilde{M}$, which build a K\"ahler structure:
\[\tilde{g}:=e^{-h}\pi^* g, \quad \widetilde{\omega}=e^{-h}\pi^*\omega.\]
This can be showed as follows:
\[d\widetilde{\omega}=d(e^{-h}\pi^*\omega)=e^{-h}(-dh\wedge \pi^*\omega+\pi^*d\omega)=e^{-h}(-dh\wedge \pi^*\omega+\pi^*\theta\wedge \pi^*\omega)=0.\]

A twisted Hamiltonian $G$-action on $(M,g,J,\omega)$ is equivalent to a Hamiltonian $\widetilde{G}^0$-action on the K\"ahler manifold $(\widetilde{M},\tilde{g}, J,\widetilde{\omega})$, where $\widetilde{G}^0$ denotes the identity component of the universal covering of $G$. To check this it is sufficient to consider the infinitesimal action of the Lie algebra $\mathfrak{g}$. We identify $X\in\mathfrak{g}=\mathrm{Lie}(G)$ with its associated fundamental vector field on $M$. Then, the fundamental vector field associated to $\mathfrak{g}=\mathrm{Lie}(\widetilde{G})$ equals $\pi^* X$. If $\mu:\mathfrak{g}\to \mathcal{C}^{\infty}(M)$ is the momentum map of the twisted Hamiltonian action of $G$ on $M$, then 
the map
\[\widetilde\mu:\mathfrak{g}\to \mathcal{C}^{\infty}(\widetilde{M}), \quad X \mapsto e^{-h}\pi^* \mu^X\]
is the momentum map of the Hamiltonian action of $\widetilde{G}$ on $\widetilde{M}$ with respect to the K\"ahler form $\widetilde{\omega}$. In fact, for any $X\in\mathfrak{g}$, we have $\iota_X\omega=\mathrm{d}^{\theta} \mu^X$, by definition of a twisted Hamiltonian action.
We now compute on $\widetilde{M}$:
\begin{equation*}
 \begin{split}
  \iota_{\pi^*X}\widetilde{\omega}&=\iota_{\pi^*X}(e^{-h}\pi^*\omega)=e^{-h}\pi^*(\iota_X\omega)=e^{-h}\pi^*(\mathrm{d}^{\theta} \mu^X)\\
  &=e^{-h}\pi^*(d\mu^X-\mu^X\theta)=e^{-h}(d(\pi^*\mu^X)-\pi^*\mu^X \cdot d h)=d(e^{-h}\pi^*\mu^X).
 \end{split}
\end{equation*}
Since $\mu$ is a homomorphism of Poisson algebras, the same holds for $\widetilde\mu$.
\end{remark}

\section{Toric Vaisman manifolds}

In this section $(M,g,J,\theta)$ denotes a Vaisman manifold.

\begin{remark}
 Let $X$ be a holomorphic Killing vector field on a Vaisman manifold $(M,J,g, \theta)$.
The $1$-form $\iota_X\omega$ is $\mathrm{d}^{\theta}$-closed iff $\mathcal{L}^\theta_X\omega=0$.
 Since by assumption $\mathcal{L}_X\omega=0$,
 it follows from \eqref{A5lietheta} that $\mathrm{d}^{\theta}\iota_X\omega=0$ iff $\theta(X)=0$.
\end{remark}

\begin{lemma}\label{A5rtw}
Any twisted Hamiltonian holomorphic action of a Lie group $G$ on a complete Vaisman manifold $(M,g,J,\theta)$ is {\it automatically isometric with respect to the Vaisman metric} and the following inclusion holds:
 \[\mathfrak{g}\subseteq\mathfrak{isom}(M)\cap \ker(\theta).\]
 \end{lemma}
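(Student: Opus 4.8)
The plan is to reduce both assertions to the single statement $\theta(X)=0$ for every fundamental vector field $X\in\mathfrak{g}$, and then to establish this identity on the K\"ahler cone covering, where the completeness hypothesis enters through Proposition~\ref{A5coneisom}. Fix $X\in\mathfrak{g}$. Since the action is twisted Hamiltonian, $X\in\mathrm{Ham}^\theta(M)$, so Lemma~\ref{A5twistedham}(i) gives $\mathcal{L}^\theta_X\omega=0$, and \eqref{A5lietheta} then yields $\mathcal{L}_X\omega=\theta(X)\,\omega$. Using $g(Y,Z)=\omega(JY,Z)$ (as $\omega=g(\cdot,J\cdot)$ and $J$ is a $g$-isometry) together with the holomorphicity identity $[X,JY]=J[X,Y]$, one computes $(\mathcal{L}_X g)(Y,Z)=(\mathcal{L}_X\omega)(JY,Z)=\theta(X)\,g(Y,Z)$, so that $X$ is conformal, $\mathcal{L}_X g=\theta(X)\,g$. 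Hence $\theta(X)=0$ would give at once $X\in\ker(\theta)$ and $\mathcal{L}_X g=0$, i.e. $X\in\mathfrak{isom}(M)$, which is exactly the claimed inclusion.

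To prove $\theta(X)=0$, I would pass to the universal covering $\pi\colon\widetilde M\to M$. By Proposition~\ref{A5univcover} and the remark at the end of the previous section, $(\widetilde M,\tilde g=e^{-h}\pi^*g,J)$ is, after identification, the K\"ahler cone $\mathcal{C}(W)=\mathbb{R}\times W$ of a Sasaki manifold $W$, carrying the metric $g_{cone}=4e^{-2t}(\dd t^2+p^*g_W)$, with $\dd h=\pi^*\theta$. Because $\theta^\sharp$ is $g$-parallel of unit length, it corresponds in this model to the product direction $\partial_t$; the normalization $|\theta^\sharp|_g=1$ then forces $h=2t$ up to a constant, so that $\pi^*g=e^{h}\tilde g$ is, up to the factor $4$, the product metric $\dd t^2+p^*g_W$ and $\dd h=2\,\dd t$. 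In particular $\pi^*g$ is complete if and only if $W$ is, so the completeness of $M$ yields the completeness of $W$. Moreover, by the same remark the twisted Hamiltonian $G$-action lifts to a genuine Hamiltonian action, $\iota_{\tilde X}\widetilde\omega=\dd\tilde\mu^X$ for $\tilde X:=\pi^*X$; hence $\mathcal{L}_{\tilde X}\widetilde\omega=\dd(\dd\tilde\mu^X)=0$, and as $\tilde X$ is still holomorphic, the K\"ahler identity $\tilde g=\widetilde\omega(\cdot,J\cdot)$ upgrades this to $\mathcal{L}_{\tilde X}\tilde g=0$. Thus the conformal field $X$ lifts to an honest Killing field of the cone metric.

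The main obstacle, and the place where completeness is decisive, is to turn ``$\tilde X$ is Killing for the (incomplete) cone metric'' into ``$\tilde X$ is tangent to the slices $\{t=\mathrm{const}\}$''. I would first note that $\tilde X$ is complete, being the lift of the fundamental vector field $X$, which is complete as the generator of a Lie group action, so that its flow lifts to a one-parameter group $\{\tilde\psi_s\}$ of isometries of $(\widetilde M,g_{cone})$. Since $W$ is complete, Proposition~\ref{A5coneisom} applies and shows that every such isometry has the form $(t,w)\mapsto(t,\psi(w))$, hence preserves the coordinate $t$. Differentiating $\tilde\psi_s$ at $s=0$ gives $\dd t(\tilde X)=0$, and therefore $(\pi^*\theta)(\tilde X)=\tilde X(h)=2\,\dd t(\tilde X)=0$. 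This descends to $\theta(X)=0$ on $M$, which, combined with $\mathcal{L}_X g=\theta(X)\,g$ from the first paragraph, yields $\mathcal{L}_X g=0$. As $X\in\mathfrak{g}$ was arbitrary, $\mathfrak{g}\subseteq\mathfrak{isom}(M)\cap\ker(\theta)$, as claimed.
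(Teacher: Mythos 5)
Your proof is correct, and its skeleton is the same as the paper's: reduce both claims to $\theta(X)=0$, note that twisted Hamiltonian plus holomorphic gives $\mathcal{L}_Xg=\theta(X)\,g$, pass to the K\"ahler cone covering, and invoke Proposition~\ref{A5coneisom}, with completeness of $M$ feeding into completeness of $W$. You differ from the paper in two local steps, both of which streamline the argument. First, to show that the lift is Killing for the cone metric, the paper computes $\mathcal{L}_{\widetilde X}(e^{-2t}\pi^*g)$ directly, using $\pi^*\theta=2\,\dd t$ to cancel the conformal factor against $\theta(X)$; you instead use the remark on lifting twisted Hamiltonian actions to Hamiltonian ones, so that $\iota_{\widetilde X}\widetilde\omega$ is exact, hence $\mathcal{L}_{\widetilde X}\widetilde\omega=0$, and holomorphy upgrades this to $\mathcal{L}_{\widetilde X}\tilde g=0$. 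Second, and more substantially, the paper extracts from Proposition~\ref{A5coneisom} only the weaker consequence that Killing fields of the cone are projectable, i.e.\ $[\widetilde X,\partial/\partial t]=0$, and then needs an extra Cartan-formula computation ($\theta(X)\theta=\mathcal{L}_X\theta=\dd(\theta(X))$, evaluated on $\theta^\sharp$, against $\theta^\sharp(\theta(X))=0$) to conclude $\theta(X)=0$; you use the full strength of the proposition --- an isometry has dilatation factor $1$, so $\rho=0$ and the flow preserves $t$ --- which gives $\dd t(\widetilde X)=0$ and hence $\theta(X)=0$ immediately, with no further computation. You are also more careful than the paper on one point: applying Proposition~\ref{A5coneisom}, which is a statement about globally defined isometries, to a Killing field requires that field to integrate to a one-parameter group, and you justify this via completeness of fundamental vector fields and path lifting, whereas the paper leaves it implicit. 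The only place you are somewhat quick is the identification $h=2t$ (equivalently $\pi^*\theta=2\,\dd t$), but this is asserted at the same level of detail in the paper itself, as part of the structure theory cited from Proposition~\ref{A5univcover}.
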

 \begin{proof}
 By assumption we have:
 \[\mathfrak{g}\subseteq \mathfrak{hol}(M) \cap \mathrm{Ham}^\theta(M).\]
 Let $X\in\mathfrak{g}\subseteq \mathrm{Ham}^\theta(M)$. From \eqref{A5tH}, it follows that $\mathcal{L}_X\omega=\theta(X)\omega$. Since $X$ is a holomorphic vector field, we have $\mathcal{L}_XJ=0$. From the following relation between the Lie derivatives:
 \[(\mathcal{L}_{X}\omega)(\cdot,\cdot)=(\mathcal{L}_X g)(\cdot, J\cdot)+g(\cdot, (\mathcal{L}_X J)\cdot), \quad \forall X\in TM,\]
it follows that $\mathcal{L}_X g=\theta(X)g$. Hence, $X$ is a conformal vector field. 

In order to show that $X$ is a Killing vector field, we consider the universal covering $\widetilde M$ of $M$, which by Proposition~\ref{A5univcover}, is biholomorphic conformal to the K\"ahler cone $(\mathcal{C}(\widetilde W), 4e^{-2t}(\dd t^2+p^*g_{\widetilde W}))$ over a Sasaki manifold $(\widetilde W, g_{\widetilde W})$. The lift of $X$ to $\widetilde M$, denoted by $\widetilde X$,  is a conformal Killing vector field with respect to the pull-back metric $\tilde g:=\pi^*g$, where $\pi$ denotes the natural projection from $\widetilde M$ to $M$. Moreover, we claim that $\widetilde X$ is a Killing vector field with respect to the K\"ahler cone metric $g_K$, as the following computation shows:
\begin{equation*}
\begin{split}
\mathcal{L}_{\widetilde X} g_K&=\mathcal{L}_{\widetilde X} (e^{-2t}\tilde g)=\widetilde{X}(e^{-2t})\tilde g+\mathcal{L}_{\widetilde X} (e^{-2t}\tilde g)=e^{-2t}(-2\dd t(\widetilde X)\tilde g+\pi^*(\mathcal{L}_{X} g))\\
&=e^{-2t}(-2\dd t(\widetilde X)+\pi^*(\theta(X)g))=0,
\end{split}
\end{equation*} 
where we used that $\pi^*\theta=2\dd t$.

Let us note, that by the theorem of Hopf-Rinow, the assumption on $(M,g)$ to be complete implies that $(\widetilde M, \tilde g)$ is complete and further that also $(\widetilde W, g_{\widetilde W})$ is complete.
Thus, applying Proposition~\ref{A5coneisom}, it follows that all Killing vector fields of the cone metric on $\mathcal{C}(\widetilde W)$ are projectable onto Killing vector fields of the Sasaki metric $g_{\widetilde W}$ on $\widetilde W$. In particular, it follows that $\left[\widetilde X,\frac{\partial}{\partial t}\right]=0$. This implies that $[X, \theta^{\sharp}]=0$.

From $\mathcal{L}_X g=\theta(X)g$ and $[X, \theta^{\sharp}]=0$, it follows:
$$\theta(X)\theta=\mathcal{L}_X \theta=\mathrm{d}(\theta(X)),$$
where the last equality is obtained by Cartan formula. This identity applied to $\theta^{\sharp}$ yields $\theta^{\sharp}(\theta(X))=\theta(X)$. 
On the other hand, we compute:
\[\theta^\sharp(\theta(X))=(\mathcal{L}_{\theta^\sharp}\theta)(X)+\theta([\theta^\sharp, X])=0,\]
since again by the Cartan formula, we have $\mathcal{L}_{\theta^\sharp}\theta=\mathrm{d}\iota_{\theta^\sharp}{\theta}=0$. Therefore, we obtain $\theta(X)=0$. Hence,  $X$ is a Killing vector field of $g$ and is orthogonal to $\theta^\sharp$.
\end{proof}

\begin{remark}\label{A5remal}
On a compact Vaisman manifold, A.~Moroianu and L.~Ornea~\cite{A5MO2009} proved that  every conformal Killing vector field is a Killing vector field with respect to the Vaisman metric. Recently, P.~Gauduchon and A.~Moroianu~\cite{A5Gauduchon-Moroianu2016} proved the following more general result: on a connected compact oriented Riemannian manifold admitting a non-trivial parallel vector field, any conformal Killing vector field is Killing.
\end{remark}

The next result shows that an action  preserving the whole Vaisman structure is automatically twisted hamiltonian. Moreover, it shows that the momentum map is given by  the anti-Lee $1$-form. More precisely, we have:

\begin{lemma}
Let $(M,g,J,\theta)$ be a Vaisman manifold and $X$ be a holomorphic Killing vector field on $M$, which is in the kernel of $\theta$. Then $X$ is a twisted Hamiltonian vector field with Hamiltonian function $f:=J\theta(X)$, \emph{i.e.} the following equality holds:
$$\iota_X\omega={\rm d}f-f\theta={\rm d}^\theta f.$$ 
\end{lemma}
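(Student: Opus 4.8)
The plan is to verify the claimed identity $\iota_X\omega = \mathrm{d}^\theta f$ directly, reading $f = J\theta(X)$ as the anti-Lee form evaluated on $X$, i.e. $f = \theta(JX) = g(\theta^\sharp, JX)$. Since $\iota_X\omega$, $\mathrm{d}f$ and $f\theta$ are all $1$-forms, it suffices to check $\mathrm{d}f(Y) = (\iota_X\omega)(Y) + f\,\theta(Y)$ for every $Y\in\mathfrak{X}(M)$, where $(\iota_X\omega)(Y) = \omega(X,Y) = g(X,JY) = -g(JX,Y)$ by skew-adjointness of $J$. Because $\theta$ is parallel ($\nabla\theta = 0$, so $\theta^\sharp$ is parallel), differentiating $f$ gives $\mathrm{d}f(Y) = Y\big(g(\theta^\sharp, JX)\big) = g(\theta^\sharp, \nabla_Y(JX))$, and I would split $\nabla_Y(JX) = (\nabla_Y J)X + J\nabla_Y X$ to treat the two terms separately.

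For the first term I would insert the formula of Remark~\ref{A5nablaJ}, namely $2(\nabla_Y J)(X) = \theta(JX)Y - \theta(X)JY + g(JY,X)\theta^\sharp + g(Y,X)J\theta^\sharp$, contract with $\theta^\sharp$, and use $\theta(X) = 0$ together with the normalizations $g(\theta^\sharp,\theta^\sharp)=1$ and $g(\theta^\sharp, J\theta^\sharp)=0$ to obtain $g(\theta^\sharp, (\nabla_Y J)X) = \tfrac12\big(f\,\theta(Y) - g(JX,Y)\big)$. For the second term, note first that $g(\theta^\sharp, J\nabla_Y X) = -g(J\theta^\sharp, \nabla_Y X)$ since $J$ is $g$-skew. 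As $X$ is Killing, the endomorphism $Y\mapsto \nabla_Y X$ is skew-symmetric, so $g(J\theta^\sharp, \nabla_Y X) = -g(\nabla_{J\theta^\sharp} X, Y)$; and since $X\in\mathfrak{isom}(M)\cap\mathfrak{hol}(M)$, Lemma~\ref{A5thJth} gives $[X, J\theta^\sharp]=0$, whence $\nabla_{J\theta^\sharp} X = \nabla_X(J\theta^\sharp) = (\nabla_X J)\theta^\sharp$. Feeding in the identity $2(\nabla_X J)(\theta^\sharp) = -JX - g(X, J\theta^\sharp)\theta^\sharp + g(X,\theta^\sharp)J\theta^\sharp$ established inside the proof of Lemma~\ref{A5thJth}, and using $\theta(X)=0$ together with $g(X, J\theta^\sharp) = -g(JX,\theta^\sharp) = -\theta(JX) = -f$, yields $g(\theta^\sharp, J\nabla_Y X) = \tfrac12\big(f\,\theta(Y) - g(JX,Y)\big)$ as well.

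Adding the two contributions gives $\mathrm{d}f(Y) = f\,\theta(Y) - g(JX,Y) = f\,\theta(Y) + (\iota_X\omega)(Y)$, that is $\iota_X\omega = \mathrm{d}f - f\theta = \mathrm{d}^\theta f$, as claimed. I expect the $J\nabla_Y X$ term to be the main obstacle: the raw quantity $\nabla_{J\theta^\sharp}X$ is not directly computable, and the argument hinges on combining the \emph{Killing} property (skew-symmetry of $\nabla X$) with the centrality relation $[X, J\theta^\sharp]=0$ of Lemma~\ref{A5thJth} to convert it into $(\nabla_X J)\theta^\sharp$, to which the structural formula for $\nabla J$ applies. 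The only other delicate point is bookkeeping of signs — the skew-adjointness $g(JX,Y) = -g(X,JY)$ and the convention $J\theta(X) = \theta(JX)$ are precisely what produce the sign $+\mathrm{d}^\theta f$ rather than $-\mathrm{d}^\theta f$ on the right-hand side.
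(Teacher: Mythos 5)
Your proof is correct and follows essentially the same route as the paper: both differentiate $f$ using $\nabla\theta=0$, evaluate the $\nabla J$ contribution via the formula of Remark~\ref{A5nablaJ}, and handle the $\nabla X$ contribution by combining the Killing skew-symmetry of $\nabla X$ with $[X,J\theta^\sharp]=0$ to reduce $\nabla_{J\theta^\sharp}X$ to $(\nabla_XJ)\theta^\sharp$. The only (immaterial) difference is that you apply the Leibniz rule to $g(\theta^\sharp,JX)$ while the paper applies it to $g(J\theta^\sharp,X)$, which also accounts for the harmless sign-convention choice in reading $J\theta(X)$.
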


\begin{proof}
We compute the differential of $f$ as follows:
\begin{equation}\label{A5iX omega}
 \textrm{d}f(Y) =(\nabla_YJ)(\theta\wedge X)+g( J\theta,\nabla_YX)
 =\frac{1}{2}(f\theta(Y)+\omega(X,Y))-g(Y ,\nabla_{J\theta}X)
\end{equation}
since $\theta (X)=0$, $|\theta|=1$ and $\nabla X$ is skew-symmetric. Note that 
$$-\nabla_{J\theta}X=\mathcal{L}_XJ\theta-\nabla_XJ\theta=-\nabla_XJ\theta=\frac{1}{2}
(f\theta+JX),$$ since $X$ 
preserves $\theta$ and $J$, and $\theta$ is parallel. Substituting in \eqref{A5iX omega}, we obtain
$$\textrm{d}f= \frac{1}{2}(f\theta+\iota_X\omega)-\nabla_{J\theta}X=f\theta+\iota_X\omega.$$\end{proof}

We consider now toric Vaisman manifolds, as a special class of toric lcK manifolds, \emph{cf.} Definition~\ref{A5deftoriclck}. 

\begin{remark}\label{A5torvaisisom}
 As in the case of K\"ahler toric manifolds, where a Hamiltonian holomorphic action automatically preserves the K\"ahler metric, also on Vaisman manifolds a twisted Hamiltonian holomorphic action preserves the Vaisman metric, see Lemma~\ref{A5rtw}.
\end{remark}

\begin{remark}\label{A5maxdim}
The maximal dimension of an effective twisted Hamiltonian torus action on a $2n$-dimensional Vaisman manifold is $n$. The proof follows by the same argument as for Hamiltonian actions on symplectic manifolds (for details see \emph{e.g.} \cite{A5Dasilva2001}). 
\end{remark}

In particular,  on a toric Vaisman manifold, by Lemma~\ref{A5rtw}, the following inclusions hold: 
\[\mathfrak{t}_n\subseteq\mathfrak{hol}(M)
\cap \mathrm{Ham}^\theta(M)\subseteq\mathfrak{isom}(M)\cap (\theta^\sharp)^\perp,\]
where $\mathfrak{t}_n$ denotes the Lie algebra of $T^n$.
Moreover, we show the following:

\begin{lemma}\label{A5Jthtor}
On a toric Vaisman manifold $(M^{2n},g,J, \theta)$, the anti-Lee vector field is part of the twisted Hamiltonian torus action, \emph{i.e.} $J\theta^{\sharp}\in\mathfrak{t}_n$.
\end{lemma}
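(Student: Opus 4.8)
The plan is to identify $J\theta^\sharp$ as a \emph{constant}-coefficient combination of the fundamental vector fields of the torus. First I would record the properties of the anti-Lee field that are already available: by Lemma~\ref{A5thJth} it is holomorphic and Killing and lies in the center of $\mathfrak{isom}(M)\cap\mathfrak{hol}(M)$, hence commutes with every element of $\mathfrak{t}_n$; by the proof of Lemma~\ref{A5twistedham} it satisfies $\iota_{J\theta^\sharp}\omega=\theta$; and by the inclusion $\mathfrak{t}_n\subseteq\mathfrak{isom}(M)\cap(\theta^\sharp)^\perp$ every $Z\in\mathfrak{t}_n$ satisfies $\theta(Z)=0$.

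Next I would pass to the dense open subset on which the effective action of $T^n$ is free (such a set exists by the principal orbit theorem, effectiveness forcing the principal isotropy to be trivial); there a basis $Z_1,\dots,Z_n$ of $\mathfrak{t}_n$ gives pointwise linearly independent fundamental fields. Since the action is twisted Hamiltonian with momentum map $\mu$, the defining relation \eqref{A5Pb} gives $\omega(Z_i,Z_j)=\{\mu^{Z_i},\mu^{Z_j}\}=\mu^{[Z_i,Z_j]}=0$, so at each such point $p$ the subspace $L_p:=\mathrm{span}\{Z_1(p),\dots,Z_n(p)\}$ is isotropic of dimension $n$ for the non-degenerate form $\omega$, i.e. Lagrangian, whence $L_p=L_p^{\omega}$. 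On the other hand $\omega(J\theta^\sharp,Z_j)=\theta(Z_j)=0$, so $J\theta^\sharp(p)$ is $\omega$-orthogonal to $L_p$ and therefore lies in it. This yields smooth functions $c_1,\dots,c_n$ with $J\theta^\sharp=\sum_i c_i Z_i$ on the dense open set.

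It remains to show that the $c_i$ are constant, which is the heart of the matter: the pointwise statement $J\theta^\sharp(p)\in L_p$ must be upgraded to genuine membership in the Lie algebra $\mathfrak{t}_n$. Commuting $J\theta^\sharp$ with each $Z_j$ and using $[Z_i,Z_j]=0$ gives $Z_j(c_i)=0$, so the $c_i$ are constant along the orbits. To propagate this transversally I would invoke that $J\theta^\sharp$ is Killing: expanding $0=\mathcal{L}_{J\theta^\sharp}g=\mathcal{L}_{\sum_i c_i Z_i}g$ and using $\mathcal{L}_{Z_i}g=0$ leaves a sum of symmetric products of $\mathrm{d}c_i$ with $Z_i^\flat$; evaluating on a pair $(Z_j,W)$ with $W$ orthogonal to the orbit, and inverting the generically non-degenerate Gram matrix $\big(g(Z_i,Z_j)\big)$, forces $\mathrm{d}c_i=0$. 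Hence each $c_i$ is a constant, so $J\theta^\sharp=\sum_i c_i Z_i\in\mathfrak{t}_n$ on a dense set and, by continuity, everywhere. The main obstacle is precisely this last constancy step: the isotropy computation only places $J\theta^\sharp$ in the span pointwise, and it is the Killing equation that rigidifies the coefficients into constants, thereby turning a pointwise linear relation into an honest element of $\mathfrak{t}_n$.
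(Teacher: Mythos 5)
Your proof is correct, and it takes a genuinely different route from the paper's. The paper argues by contradiction: since $J\theta^\sharp\in\mathrm{Ham}^\theta(M)$ by Lemma~\ref{A5twistedham}~(ii) and $J\theta^\sharp$ commutes with all of $\mathfrak{t}_n$ by Lemma~\ref{A5thJth}, if $J\theta^\sharp\notin\mathfrak{t}_n$ then $\mathfrak{t}_n\oplus\langle J\theta^\sharp\rangle$ would be an $(n+1)$-dimensional abelian algebra acting effectively, holomorphically and in a twisted Hamiltonian way, contradicting the dimension bound of Remark~\ref{A5maxdim}. You instead establish the membership directly: Lagrangian orbits on the free stratum, $\omega$-orthogonality of $J\theta^\sharp$ to the orbits giving the pointwise tangency $J\theta^\sharp=\sum_i c_iZ_i$, and then the rigidification $\mathrm{d}c_i=0$ via the commutation relations together with the Killing equation and the invertible Gram matrix $\bigl(g(Z_i,Z_j)\bigr)$. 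Your route is longer, since it unpacks the isotropic-orbit mechanism that the paper delegates to Remark~\ref{A5maxdim}, but it buys something real: the flow of $J\theta^\sharp$ is not a priori periodic, so the enlarged algebra generates an action of $T^n\times\mathbb{R}$ rather than of a torus $T^{n+1}$, and the isotropy argument behind Remark~\ref{A5maxdim} yields only the pointwise inclusion $J\theta^\sharp(p)\in\mathrm{span}\{Z_1(p),\dots,Z_n(p)\}$; your Killing-equation step is exactly what upgrades this to an honest element of $\mathfrak{t}_n$, a point the paper's contradiction glosses over (it could alternatively be handled, on a compact $M$, by closing up the group inside the isometry group). Two details you should make explicit in a final write-up: the passage from $\mathrm{d}c_i=0$ to genuine constancy uses connectedness of the free stratum, which is part of the principal orbit theorem for an effective action of the compact connected torus on the connected manifold $M$; and, exactly as in the paper's own argument, you rely on Lemma~\ref{A5rtw} (for $\theta(Z_j)=0$ and $\mathcal{L}_{Z_j}g=0$), so completeness of the Vaisman metric is implicitly assumed, as in the discussion preceding the lemma.
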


\begin{proof}
We argue by contradiction and assume that $J\theta^{\sharp}\notin\mathfrak{t}_n$. By Remark~\ref{A5maxdim}, $n$ is the maximal dimension of a torus acting effectively and twisted Hamiltonian on a  $2n$-dimensional Vaisman manifold. Hence, it suffices to show that $\mathfrak{t}_n\cup\{J\theta^{\sharp}\}$ is still an abelian Lie algebra acting twisted Hamiltonian on $(M,g,J)$. This is a consequence of Lemma~\ref{A5twistedham}, (ii), stating that $J\theta^{\sharp}\in \mathrm{Ham}^{\theta} M$, and of Lemma~\ref{A5thJth}, where it is shown that $J\theta^{\sharp}$ is in the center of $\mathfrak{aut}(M)$, so in particular commutes with all elements of $\mathfrak{t}_n\subset\mathfrak{aut}(M)$.
\end{proof}

\begin{example}\label{A5example}
The standard example of a Vaisman manifold is $S^1\times S^{2n-1}$, endowed with 
the complex structure and metric induced by the diffeomorphism
$$\Psi\colon\mathbb C^n\setminus\{0\}/\mathbb Z\longrightarrow S^1\times S^{2n-1},\;\; 
[z]\longmapsto (e^{i\ln |z|}, \frac{z}{|z|}),$$
where $[z]=[z']$ if and only if there exists $k\in\mathbb Z$ such that $z'=e^{2\pi k}z$. 
The Hermitian metric $\frac{{\rm d}z_j\otimes {\rm d}\bar z_j}{|z|^2}$ on 
$\mathbb C^n\setminus\{0\}$ is invariant under the action of $\mathbb Z$ and hence it descends to 
a Hermitian metric on $\mathbb C^n\setminus\{0\}/\mathbb Z$, with   
$$g:={\rm Re}\frac{{\rm d}z_j\otimes {\rm d}\bar z_j}{|z|^2},\quad 
\omega:=i\frac{{\rm d}\bar z_j\wedge {\rm d} z_j}{|z|^2},\quad 
\theta:={\rm d}\ln|z|^{-2}$$
defining the lcK metric, $2$-fundamental form and Lee form respectively. The Lee form $\theta$
is parallel, hence $S^1\times S^{2n-1}$ is Vaisman. We define the following $T^n$ action on 
$\mathbb C^n\setminus\{0\}/\mathbb{Z}\simeq S^1\times S^{2n-1}$: $t\longmapsto ([z]\mapsto[t_1z_1,\ldots, t_nz_n])$. It is easy to 
check that it is effective and holomorphic. A basis of fundamental vector fields of the action is given by 
$X_j([z]):=iz_j\frac{\partial}{\partial z_j}-i\bar z_j\frac{\partial}{\partial \bar z_j}$, for $j\in
\{1,\ldots, n\}$. Therefore, we have 
$$\iota_{X_j}\omega=\frac{\bar z_j {\rm d} z_j+z_j {\rm d}\bar z_j}{|z|^2}=
{\rm d}\left(\frac{|z_j|^2}{|z|^2}\right)-
\frac{|z_j|^2}{|z|^2}\theta={\rm d}^\theta \left(\frac{|z_j|^2}{|z|^2}\right),$$ 
which shows that the action is also twisted Hamiltonian, with momentum map 
$\mu^{X_j}([z]):=\frac{|z_j|^2}{|z|^2} $. Hence, $S^1\times S^{2n-1}$ is a toric Vaisman manifold.
\end{example}

We are now ready to state our result:

\begin{theorem}\label{A5vaissas}
Let $(M^{2n},J,g,\theta)$ be a complete Vaisman manifold and $W^{2n-1}$ be its associated Sasaki manifold, such that the minimal covering $\widehat M$ of $M$ is biholomorphic and conformal to the K\"ahler cone over $W$. If $M$ is a toric Vaisman manifold, then $W$ is a toric Sasaki manifold.
\end{theorem}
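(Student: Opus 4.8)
The plan is to transport the torus action from $M$ down to $W$ by passing through the minimal covering. By Proposition~\ref{A5univcover}, $\widehat M$ is biholomorphic and conformal to the K\"ahler cone $\mathcal{C}(W)=\mathbb R\times W$, on which $\Gamma_{\mathrm{min}}$ acts by biholomorphic homotheties, every nontrivial one being a \emph{proper} homothety. Write $\widehat\pi\colon\widehat M\to M$ for the covering. By Lemma~\ref{A5rtw} the torus action satisfies $\mathfrak t_n\subseteq\mathfrak{isom}(M)\cap\ker\theta$, so $\theta(X)=0$ for every $X\in\mathfrak t_n$. Using $\widehat\pi^*\theta=2\,\mathrm dt$ as in the proof of Lemma~\ref{A5rtw}, the pulled-back Vaisman metric is the \emph{product} metric $\widehat\pi^*g=4(\mathrm dt^2+p^*g_W)$ on $\mathbb R\times W$; since $(M,g)$ is complete, so is this product, whence $(W,g_W)$ is complete and Proposition~\ref{A5coneisom} becomes available on $\mathcal{C}(W)$.

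First I would descend the infinitesimal action. Each $X\in\mathfrak t_n$ lifts to a unique $\widehat\pi$-related field $\widehat X$ on $\widehat M$, which is holomorphic (as $\widehat\pi$ is a local biholomorphism) and, by the same computation as in Lemma~\ref{A5rtw}, Killing for $g_{cone}$; moreover $\mathrm dt(\widehat X)=\tfrac12(\widehat\pi^*\theta)(\widehat X)=0$. Being a cone-Killing field whose flow fixes $t$, Proposition~\ref{A5coneisom} forces its flow to have the form $(t,w)\mapsto(t,\psi_s(w))$ with $\psi_s\in\mathrm{Isom}(W,g_W)$; hence $\widehat X$ is $t$-independent, $[\widehat X,\partial_t]=0$, and it projects to a Killing field $X_W$ on $W$. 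Since $\widehat X$ is holomorphic and commutes with $\partial_t$, it also commutes with $J\partial_t$, i.e.\ with the Reeb field $\xi=-J\partial_t$ (with the cone normalization above); thus $X_W$ preserves $\xi$ and is an infinitesimal Sasaki automorphism. The map $X\mapsto X_W$ is injective (if $X_W=0$ then $\widehat X=0$, so $X=0$), yielding an $n$-dimensional abelian algebra of Sasaki infinitesimal automorphisms on $W$. Applying this to $J\theta^\sharp\in\mathfrak t_n$ (Lemma~\ref{A5Jthtor}) and using $\widehat{J\theta^\sharp}=\tfrac12 J\partial_t=-\tfrac12\xi$, the Reeb field itself lies in this algebra, so $\xi\in\mathfrak t_n$ after descent.

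Next I would integrate. Each $\widehat X$ is Killing for the complete product metric $\widehat\pi^*g$, hence complete, so a basis of $\mathfrak t_n$ generates an $\mathbb R^n$-action $\Phi$ on $\widehat M$ by cone-isometries; as each $\Phi_v$ is of the form $(t,w)\mapsto(t,\psi_v(w))$, it descends to an $\mathbb R^n$-action $v\mapsto\psi_v$ on $W$ by Sasaki automorphisms. Let $q\colon\mathbb R^n\to T^n=\mathbb R^n/\mathbb Z^n$ be the covering of the torus; by construction $\widehat\pi\circ\Phi_v=\tau_{q(v)}\circ\widehat\pi$. The key is to compute the kernel lattice $\Lambda_W=\{v:\psi_v=\mathrm{id}_W\}=\{v:\Phi_v=\mathrm{id}_{\widehat M}\}$. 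On one hand, for $v\in\mathbb Z^n$ one has $\tau_{q(v)}=\mathrm{id}_M$, so $\Phi_v$ covers the identity and hence $\Phi_v\in\Gamma_{\mathrm{min}}$; but $\Phi_v$ is a cone-\emph{isometry}, while every nontrivial element of $\Gamma_{\mathrm{min}}$ is a proper homothety, forcing $\Phi_v=\mathrm{id}$, i.e.\ $\mathbb Z^n\subseteq\Lambda_W$. On the other hand, if $\Phi_v=\mathrm{id}_{\widehat M}$ then $\tau_{q(v)}=\mathrm{id}_M$, so $q(v)=e$ by effectiveness on $M$ and $v\in\mathbb Z^n$. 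Thus $\Lambda_W=\mathbb Z^n$, the action descends to an \emph{effective} $T^n$-action on $W$ by Sasaki automorphisms with $\xi\in\mathfrak t_n$, and $W$ is a toric Sasaki manifold.

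I expect the main obstacle to be precisely this latticing step: upgrading the injective infinitesimal action to a genuine \emph{torus} (rather than merely $\mathbb R^n$, or a non-closed subgroup of $\mathrm{Isom}(W)$) hinges on the proper-homothety property of $\Gamma_{\mathrm{min}}$, which is exactly what forces the defining lattice $\mathbb Z^n$ of the torus acting on $M$ to descend to the trivial transformation on $W$. Secondary care is needed to justify that the lift of the whole action (not just of each vector field) is well defined and complete, and to fix the normalizations so that $\widehat\pi^*g=4(\mathrm dt^2+p^*g_W)$ and $\xi=-J\partial_t$ hold.
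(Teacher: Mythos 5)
Your proposal is correct and follows essentially the same route as the paper: lift the infinitesimal action to the minimal covering, use Proposition~\ref{A5coneisom} to see the lifted fields are projectable Killing fields on $W$ commuting with $\xi$, and then use the fact that every nontrivial element of $\Gamma_{\mathrm{min}}$ is a \emph{proper} homothety to show the deck transformation realized by a period-one flow (equivalently, by a lattice vector of $\mathbb{Z}^n$) must be the identity, so the $\mathbb{R}^n$-action closes up to a $T^n$-action on $W$. The only cosmetic difference is that you organize the closing-up step via the kernel lattice $\Lambda_W$, while the paper argues orbit by orbit that the locally defined element $\gamma\in\Gamma_{\mathrm{min}}$ is constant and then trivial.
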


\begin{proof}
Let $\pi\colon\widehat M\to M$ denote the projection of the minimal covering. $\widehat M$ is naturally endowed with the pullback metric $\hat g:=\pi^*g$ and the complex structure $\widehat J$, such that $\pi$ is a local isometric biholomorphism. 
By Proposition~\ref{A5univcover}, we know that $ \widehat M=\mathcal{C}(W)$, where $(W,g_W,\xi, \eta, \Phi)$ is a complete Sasaki manifold and $\mathcal{C}(W)=\mathbb{R}\times W$ is its cone. 
By this identification, the lift of the $1$-form $\theta$, which is exact on $\widehat M$ \emph{cf.} Remark~\ref{A5thminexact}, equals $2\mathrm{d}t$, where $t$ is the coordinate of the factor $\mathbb{R}$. Moreover, the relationship between the induced metric $\hat g$ and the K\"ahler cone metric $g_K:=g_{\mathrm{cone}}=4e^{-2t}(dt^2+p^*g_W)$ is the following: 
$\hat g=e^{2t}g_K=4(\mathrm{d} t^2+p^*g_W)$. The complex structure $\widehat J$ coincides with the usual complex structure induced by the Sasaki structure on the K\"ahler cone, \emph{i.e.} $\widehat J (p^*\xi)=\frac{\partial}{\partial t}$ and $\widehat J(\frac{\partial}{\partial t})=-p^*\xi$ and $\widehat J$ coincides with the transverse complex structure $\Phi$ on the horizontal distribution $\xi^\perp$ in $\mathrm{T}W$.

By assumption, the Vaisman manifold $M^{2n}$ is toric, hence it is equipped with an effective twisted Hamiltonian action $\tau: T^{n}\to \mathrm{Diff}(M)$. 

 By Lemma~\ref{A5Jthtor}, $J\theta^{\sharp}\in\mathfrak{t}_n$, hence we may choose  a basis $X_1, \dots, X_n$ of the Lie algebra $\mathfrak{t}_n\cong \mathbb{R}^{n}$, such
 that the fundamental vector field on $M$ associated to $X_1$ is $-2 J\theta^{\sharp}$. For simplicity, we still denote the induced fundamental vector fields of the action by $X_j\in \mathfrak{X}(M)$, for $1\leq j\leq n$. 
From Remark~\ref{A5omth} and Lemma~\ref{A5rtw}, it follows that $[X_j, \theta^{\sharp}]=0$, for all $j$.
Moreover, Lemma~\ref{A5rtw} implies that $g(X_j, \theta^{\sharp})=0$,  for all $j$. 

Since $\pi$ is a local diffeomorphism, we can lift each vector field $X_j$ uniquely to a
vector field $\widehat {X_j}$ on $\widehat M$. As $X_j$ commute pairwise, the same is true for their lifts, \emph{i.e.} we have $[\widehat {X_j},\widehat {X_k}]=0$, for all $1\leq j,k\leq n$.  Note that the lift of $\theta^{\sharp}$ to $\widehat M$ equals $(\pi^*\theta)^{\sharp,\hat g}$, \emph{i.e.} the dual of $\pi^*\theta=2\mathrm{d} t$ with respect to the metric $\tilde g$. Thus, $\widehat {\theta^{\sharp}}=\frac{1}{2}\frac{\partial}{\partial t}$. Hence, $\widehat {X_1}$, which is the lift to the universal covering of $-2J\theta^{\sharp}$, equals $\widehat {X_1}=-2\widehat J ((\pi^*\theta)^{\sharp,\hat g})=-2\widehat J (\frac{1}{2} \frac{\partial}{\partial t})=p^*\xi$. It is then clear, that $\widehat {X_1}$ projects through $p$ on $W$ to $\xi$. In fact, each lift $\widehat {X_j}$ is projectable onto vector fields on $W$, since they are constant along~$\mathbb{R}$: $\left[\frac{\partial}{\partial t}, \widehat {X_j}\right]=2\left[\widehat {\theta^{\sharp}}, \widehat {X_j}\right]=2\widehat {[\theta^{\sharp}, X_j]}=0$, for all $1\leq j\leq n$. We denote their projections by $Y_j:=p_*\widehat {X_j} \in\mathfrak{X}(W)$, for all $1\leq j\leq n$. In particular, $Y_1$ equals the Reeb vector field $\xi$. We remark that the vector fields $\{Y_1, \dots, Y_n\}$ commute pairwise and are complete. Thus, they give rise to an effective action $\tau_W\colon\mathbb{R}^n\to \mathrm{Diff}(W)$.

We show next that this action acts by automorphisms of the Sasaki structure, \emph{i.e.} the vector fields $\{Y_1, \dots, Y_n\}$ preserve $(g_W,\xi, \eta, \Phi)$. More precisely, it is sufficient to show that they are Killing vector fields, since they commute with $Y_1=\xi$. The torus action $\tau$ on the Vaisman manifold is by 
isometries, \emph{cf.} Lemma~\ref{A5rtw}, so each vector field $X_j$ is Killing with respect to the Vaisman metric $g$. The projection $\pi$ being a local isometry between $(\widehat  M, \hat  g)$ and $(M,g)$, it follows that each lift $\widehat {X_j}$ is a Killing vector field with respect to the metric $\hat g=4(\mathrm{d} t^2+p^*g_W)$, hence their projections onto $(W,g_W)$ are still Killing vector fields with respect to the Sasaki metric $g_W$.  

In order to conclude that $W$ is a toric Sasaki manifold, we need to argue why the action $\tau_W$ of $\mathbb{R}^n$ on $W$ naturally induces an action of the torus $T^n$ on $W$. For this, it is enough to show that for each $X\in\mathfrak{t}_n$ with the associated fundamental vector on $M$ having closed orbits of the same period, also its lift $\widehat X$ has the same property.  Let $\Psi_s$ and $\widehat \Psi_s$ denote the flow of $X$, resp. of $\widehat X$. We assume that all the orbits of $X$ close after time $s=1$, \emph{i.e.} $\Psi_1(x)=x$, for all $x\in M$.  Then, since  $\pi\circ \widehat \Psi_s=\Psi_s\circ\pi$, for all values of $s$, it follows that for each $(t, w)\in\widehat M$, we have  $\pi(( t, w))=\pi (\Psi_1(t,w))$, so there exists $\gamma\in\Gamma_{\mathrm{min}}$, such that $\Psi_1(t,w)=\gamma \cdot (t,w)$. A priori $\gamma$ may depend on the choice of $(t, w)$, but since the function defined in this way would be continuous with values in the discrete group $\Gamma_{\mathrm{min}}$, it must be constant. On the other hand, $\widehat X$ is a Killing vector field with respect to the K\"ahler metric $g_K=e^{-2t}\hat g$ (since, as shown above,  $\widehat X$ is Killing with respect to the pull-back $\hat g$ of the Vaisman metric and it also leaves invariant the conformal factor, as $\theta(X)=0$ implies $\mathrm{d}t(\widehat X)=0$). Applying Proposition~\ref{A5coneisom}, we have $\Psi_1(t,w)=(t, \psi(w))$, where $\psi$ is an isometry of $W$. Hence, for all $(t,w)\in\widehat M$, we have: $\gamma \cdot (t,w)=(t, \psi(w))$, in particular $\gamma$ acts as an isometry of the K\"ahler metric. As $\gamma\in \Gamma_{\mathrm{min}}$, it follows from the definition of $\Gamma_{\mathrm{min}}$ that $\gamma=\mathrm{id}$. We conclude that all orbits of $\widehat X$ are also closed with the same period.   
\end{proof}

\begin{remark}
The argument in the proof of Theorem~\ref{A5vaissas} also applies to the universal covering of a toric complete Vaisman manifold, showing that it carries an effective holomorphic Hamiltonian action of $\mathbb{R}^n$, \emph{i.e.} a completely integrable Hamiltonian system, with respect to its K\"ahler structure.
\end{remark}

Conversely, we show:
\begin{theorem}\label{A5sasvais}
Let $(W^{2n-1},g_W, \xi,\eta, \Phi)$ be a toric complete Sasaki mani\-fold with the effective torus action $\tau\colon T^n\to\mathrm{Diff}(W)$.  
Let $\Gamma$ be a discrete group of biholomophic homotheties acting freely and properly discontinuously on  the K\"ahler cone $(\mathcal{C}(W), 4e^{-2t}(dt^2+p^*g_W))$ and hence inducing a Vaisman structure on the quotient $M:=\mathcal{C}(W)/\Gamma$. If the  action $\tau$, which is extended to $\mathcal{C}(W)$ acting trivially on $\mathbb{R}$, commutes with $\Gamma$, then $M$ is a toric  Vaisman manifold.
\end{theorem}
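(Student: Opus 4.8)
The plan is to transport the given torus action to the K\"ahler cone, check that it descends to $M$, and then verify the three requirements of Definition~\ref{A5deftoriclck}: effectiveness, holomorphicity, and the twisted Hamiltonian property. First I would extend $\tau$ to $\mathcal{C}(W)=\mathbb{R}\times W$ by letting it act trivially on the $\mathbb{R}$-factor, and denote by $\widehat{X}_1,\dots,\widehat{X}_n$ the fundamental vector fields of the extended action, with $\widehat{X}_1=p^*\xi$ the lift of the Reeb field. Each $\widehat{X}_j$ equals $p^*Y_j$ for the fundamental field $Y_j$ of $\tau$ on $W$, so it is $t$-independent and tangent to $W$. Since each $Y_j$ preserves $(g_W,\xi,\eta,\Phi)$, and the cone metric $g_K=4e^{-2t}(\mathrm{d}t^2+p^*g_W)$ and the complex structure $\widehat{J}$ (determined by $\widehat{J}(p^*\xi)=\frac{\partial}{\partial t}$, $\widehat{J}(\frac{\partial}{\partial t})=-p^*\xi$ and $\widehat{J}=\Phi$ on $\xi^\perp$) are built solely from these data together with $\frac{\partial}{\partial t}$, which the extension fixes, each $\widehat{X}_j$ is a holomorphic Killing field of $(\mathcal{C}(W),g_K,\widehat{J})$. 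By hypothesis the extended action commutes with $\Gamma$, so the $\widehat{X}_j$ are $\Gamma$-invariant and descend to commuting vector fields $X_1,\dots,X_n$ on $M$ generating a $T^n$-action.

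Next I would record the properties of the descended fields. Holomorphicity and the Killing property for the Vaisman metric pass down from the cone, since $\pi^*g=\widehat{g}=e^{2t}g_K=4(\mathrm{d}t^2+p^*g_W)$ is preserved by the $\widehat{X}_j$ (they preserve $g_K$ and the invariant factor $e^{2t}$) and $J$ is the descent of $\widehat{J}$. Because $\pi^*\theta=2\,\mathrm{d}t$ and $\widehat{X}_j=p^*Y_j$ is tangent to $W$, we get $\theta(X_j)=0$, i.e. $X_j\in\ker\theta$; in particular $X_1$ is a multiple of the anti-Lee field $J\theta^\sharp$, in agreement with Lemma~\ref{A5Jthtor}. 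Effectiveness descends up to a finite subgroup: if $\bar t\in T^n$ acts trivially on $M$, then $\tau_{\bar t}=\gamma$ on the cone for some $\gamma\in\Gamma$ depending continuously, hence constantly, on the base point, and $\bar t\mapsto\gamma$ is a continuous homomorphism into the discrete group $\Gamma$; comparing $\mathbb{R}$-components (using that homotheties of the cone have the form $(t,w)\mapsto(t+\rho,\psi(w))$ by Proposition~\ref{A5coneisom}) it kills the identity component of the kernel, so by effectiveness of $\tau$ on $W$ the kernel $K$ is finite and the induced effective action is that of the $n$-torus $T^n/K$.

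Each $X_j$ is thus a holomorphic Killing field lying in $\ker\theta$, so the Lemma asserting that such fields are twisted Hamiltonian applies and produces a Hamiltonian $f_j:=J\theta(X_j)$ with $\iota_{X_j}\omega=\mathrm{d}^\theta f_j$ and associated twisted Hamiltonian field $X_{f_j}=X_j$. I define the candidate momentum map $\mu\colon\mathfrak{t}_n\to C^\infty(M)$ as the linear extension of $X_j\mapsto f_j$; since $M$ is not globally conformally K\"ahler this is the unique choice.

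The step carrying the real content is verifying that $\mu$ is a homomorphism for the twisted Poisson bracket \eqref{A5Pb}. As $T^n$ is abelian this reduces to $\{f_i,f_j\}=\omega(X_{f_i},X_{f_j})=\omega(X_i,X_j)=0$ for all $i,j$. I would check this by lifting to the cone: since $\widehat{X}_j=p^*Y_j$ and $\pi^*\omega=e^{2t}\omega_K$, a direct computation using $\widehat{J}(p^*Y_j)=\eta(Y_j)\,\frac{\partial}{\partial t}+p^*(\Phi Y_j)$ shows that $\omega(X_i,X_j)$ is proportional to $g_W(Y_i,\Phi Y_j)$, hence to $\mathrm{d}\eta(Y_i,Y_j)$. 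The latter vanishes because each $Y_j$ is an automorphism of the Sasaki structure, so $\mathcal{L}_{Y_i}\eta=0$, giving $\iota_{Y_i}\mathrm{d}\eta=-\mathrm{d}(\eta(Y_i))$ and hence $\mathrm{d}\eta(Y_i,Y_j)=-Y_j(\eta(Y_i))=-\eta([Y_j,Y_i])=0$ by commutativity. Therefore $\omega(X_i,X_j)=0$, $\mu$ is a homomorphism, and the $T^n/K$-action on $M$ is effective, holomorphic and twisted Hamiltonian, i.e. $M$ is a toric Vaisman manifold. The main obstacle is precisely this vanishing of $\{f_i,f_j\}$; conceptually it expresses the fact that the torus momenta Poisson-commute, which is exactly the hypothesis that the K\"ahler cone of the toric Sasaki manifold $W$ is toric.
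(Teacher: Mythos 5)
Your proof is correct, and while its skeleton (extend $\tau$ trivially to the $\mathbb{R}$-factor of the cone, use the commutation with $\Gamma$ to descend to $M$, then verify effectiveness, holomorphicity and the twisted Hamiltonian condition of Definition~\ref{A5deftoriclck}) coincides with the paper's, the decisive step is handled by a genuinely different mechanism. The paper establishes the twisted Hamiltonian property upstairs on the cone: it writes the K\"ahler form as the exact form $2\,\mathrm{d}(e^{-2t}p^*\eta)$, computes $\iota_{p^*X_j}\omega_K=-2\,\mathrm{d}\bigl(e^{-2t}\,\eta(X_j)\circ p\bigr)$ from the standard Sasaki momentum map, checks that $\eta(X_j)\circ p$ is $\Gamma$-invariant, and only then reads off $\iota_{\overline{Y_j}}\omega=\mathrm{d}^\theta\mu(X_j)$ downstairs. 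You instead descend the vector fields first, verify that they are holomorphic, Killing and lie in $\ker\theta$ (the last point from $\pi^*\theta=2\,\mathrm{d}t$ and tangency to $W$), and then invoke the paper's own lemma that a holomorphic Killing field in $\ker\theta$ on a Vaisman manifold is automatically twisted Hamiltonian with Hamiltonian $J\theta(X)$; this is cleaner, avoids the explicit cone computation, and recovers the same momentum map up to the normalization constant. You also supply two verifications the paper leaves implicit: (a) that the momenta Poisson-commute, $\omega(X_i,X_j)=0$, which you correctly reduce to $\mathrm{d}\eta(Y_i,Y_j)=0$ via $\mathcal{L}_{Y_i}\eta=0$ and $[Y_i,Y_j]=0$ --- this is exactly what is needed for $\mu$ to be a Lie algebra homomorphism rather than merely making the action weakly twisted Hamiltonian; and (b) that the descended action may have a finite kernel $K$ (torus elements realized by isometries inside $\Gamma$), so that the effectively acting torus is $T^n/K\cong T^n$. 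Both additions strengthen the argument rather than revealing a gap in it.
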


\begin{proof}

Let $\pi$ denote the projection corresponding to the action of the group~$\Gamma$, $\pi\colon \mathcal{C}(W)\to M=\mathcal{C}(W)/\Gamma$. We still denote by $\tau$ the extension of the $T^n$-action on $\mathbb{R}\times W$, obtained by letting $T^n$ act trivially on $\mathbb{R}$. The assumption on this action to commute with the group $\Gamma$, ensures that it naturally projects onto a $T^n$-action on the quotient $M$, that we denote by $\bar\tau\colon T^n \to \mathrm{Diff}(M)$. In order to show that $M$ is a toric Vaisman manifold, we need to check that this action is effective, holomorphic and twisted Hamiltonian with respect to the induced Vaisman structure on $M^{2n}$, that we denote by $(g,J,\theta)$. For this, we study the induced fundamental vector fields of the action.

 We recall that through $\pi$, the exact form 
 $2\mathrm{d} t$ projects to the Lee form $\theta$ of the induced Vaisman structure on the quotient $M$, the vector field $\frac{1}{2}\frac{\partial}{\partial t}$ projects to the Lee vector field $\theta^{\sharp}$ and hence, $p^*\xi=-J(\frac{\partial}{\partial t})$ projects to $-2J\theta^{\sharp}$. We also know that the metric which projects onto the Vaisman metric $g$ is the product metric $4(\mathrm{d}t^2+g_W)$.
 
By assumption, the action $\tau\colon T^n\to\mathrm{Diff}(W)$ preserves the Sasaki structure and has the property that $\xi\in\mathfrak{t}_{n}$. We choose  a basis $X_1, \dots, X_n$ of the Lie algebra $\mathfrak{t}_n\cong \mathbb{R}^{n}$, such
 that the fundamental vector field on $M$ associated to $X_1$ is $-\frac{1}{2} \xi$. We still denote the induced fundamental vector fields of the action by $X_j\in \mathfrak{X}(W)$, for $1\leq j\leq n$. 
 We consider the pull-back of these vector fields on $\mathcal{C}(W)=\mathbb{R}\times W$ through the projection $p\colon \mathbb{R}\times W\to W$, $Y_j:=p^* X_j$, for $1\leq j\leq n$.  Hence, $Y_1=-p^*(\frac{1}{2}\xi)$ projects through $\pi$ to $J\theta^{\sharp}$, which is a Killing vector field with respect to the Vaisman metric $g$. We claim that this property is true for all vector fields $Y_j$. First we notice that each $Y_j$ is projectable through $\pi$, because of the hypothesis on the action of $T^n$ on $\mathbb{R}\times W$ to commute with $\Gamma$. We denote the projected vector fields by $\overline{Y_j}:=\pi_* Y_j$. In fact, by construction, the vector fields $\{\overline{Y_1},\dots, \overline{Y_n}\}$ are exactly the fundamental vector fields of the action $\bar\tau$ corresponding to the fixed basis $\{X_1, \dots, X_n\}$ of $\mathfrak{t}_n$.
As each $X_j$ is a Killing vector field with respect to the Sasaki metric $g_W$, it follows that $Y_j=p^*X_j$ is a Killing vector field with respect to the product metric $4(\mathrm{d}t^2+g_W)$ on $\mathbb{R}\times W$. Since $\pi$ is a local isometry between $(\mathbb{R}\times W, 4(\mathrm{d}t^2+g_W))$ and $(M,g)$, the vector fields $\overline{Y_j}$ are still Killing with respect to $g$.

The same argument works in order to show that $\overline{Y_j}$ is a holomorphic vector field on the Vaisman manifold $(M,g,J,\theta)$.  Namely, we use that $\pi$ is a local biholomorphism and that $Y_j=p^*X_j$ is a holomorphic vector field with respect to the induced holomorphic structure $J_K$ on the cone $\mathcal{C}(W)$. The last statement follows from the following straightforward computation: 
\begin{equation*}
\begin{split}
(\mathcal{L}_{Y_j} J_K)(p^* \xi)&=\left[Y_j, \frac{\partial}{\partial t}\right]-J_K(p^*([X_j,\xi]))=0, \\ 
(\mathcal{L}_{Y_j} J_K)\left(\frac{\partial}{\partial t}\right)&=-[Y_j, p^*\xi]-J_K\left(\left[Y_j,p^*\frac{\partial}{\partial t}\right]\right)=0,\\ 
\end{split}
\end{equation*}

\begin{equation*}
\begin{split}
(\mathcal{L}_{Y_j} J_K)(p^*X)&=[Y_j, p^*(\Phi(X))]-J_K([Y_j,p^*X])\\
&=p^*([X_j,X]-\Phi[X_j,X])=p^*((\mathcal{L}_{X_j}\Phi)(X))=0,
\end{split}
\end{equation*}
where $X$ is any vector field of the contact distribution of $W$, \emph{i.e.} $X$ is orthogonal to $\xi$.

The K\"ahler form of the K\"ahler cone is on the one hand given by: $\omega_{K}=e^{-2t}\pi^*\omega$, where $\omega$ is the fundamental $2$-form of the Vaisman structure: $\omega=g(\cdot, J\cdot)$, and on the other hand, it is exact and equals $2\mathrm{d}(e^{-2t}p^*\eta)$, where $\eta$ is the $1$-form of the Sasaki structure on $W$.
We recall that any action preserving the Sasaki structure is automatically hamiltonian with momentum map obtained by pairing with the opposite of the $1$-form $\eta$, \emph{i.e.} for any Killing vector field $X$ commuting with $\xi$, we have: $\iota_{X} d\eta=-d(\eta(X))$. The induced action  on the K\"ahler cone is also hamiltonian. More precisely, the following equation holds, for $X$ as above:
\[\iota_{(p^*X)} \omega_{K}=2\iota_{(p^*X)} \mathrm{d}(e^{-2t}p^*\eta)=-2 \mathrm{d}(e^{-2t}\eta(X)\circ p),\]
since $\mathcal{L}_X(e^{-2t}\eta)=0$. We compute for each $1\leq j\leq n$:
\begin{equation}\label{A5twham}
\begin{split}
\pi^*(\iota_{\overline{Y_j}}\omega)&=\iota_{Y_j}(\pi^*\omega)=\iota_{Y_j}(e^{2t}\omega_K)=e^{2t}\mathrm{d}(e^{-2t}\eta(X_j)\circ p)\\
&=-2(\eta(X_j)\circ p)\mathrm{d}t+d(\eta(X_j)\circ p).
\end{split}
\end{equation}
We now notice that the function $\eta(X_j)\circ p$ is invariant under the action of the group $\Gamma$. This is due, on the one hand, to the fact that $X_j$ is a fundamental vector field associated to the torus action, which by assumption commutes with $\Gamma$. On the other hand, $\eta$ equals the projection on $W$ of $J_K(\mathrm{d}t)$ and $\Gamma$ acts by biholomorphisms with respect to $J_K$ and by homotheties with respect to the cone metric, hence it automatically commutes with the radial flow $\phi_s$, \emph{cf.} Proposition~\ref{A5coneisom}.
We denote the projection of the function $\pi_*(\eta(X_j)\circ p)$ on $M$ by $\mu(X_j)=-\frac{1}{2}J\theta(\overline{Y_j})$. Hence, the right hand side of \eqref{A5twham} is projectable through $\pi$ and we obtain on $M$:
\begin{equation*}
\begin{split}
\iota_{\overline{Y_j}}\omega=-\mu(X_j)\theta+d(\mu(X_j))=\mathrm{d}^\theta(\mu(X_j)),
\end{split}
\end{equation*}
showing that the torus action $\bar\tau$ defined above is twisted Hamiltonian with momentum map defined by $\mu\colon\mathfrak{t}_n\to \mathcal{C}^\infty(M)$, $\mu(X)=-\frac{1}{2}J\theta(X_M)$, where $X_M$ is the fundamental vector field associated to $X$ through the action~$\bar\tau$. 

 \end{proof}

\section{Toric Compact Regular Vaisman manifolds} \label{A5secregvais}

We consider in this section the special case of a strongly compact regular Vaisman manifold and show that it is toric if and only if the K\"ahler quotient is toric.

Recall that a Vaisman manifold $(M,g,J,\theta)$ is called {\it regular} if the $1$-dimen\-sional distribution spanned by the Lee vector field $\theta^{\sharp}$ is regular, meaning that $\theta^{\sharp}$ gives rise to an $S^1$-action on $M$, and it is called {\it strongly regular} if both  the Lee and anti-Lee vector field give rise to an $S^1$-action.

\begin{theorem}\label{A5vaisreg}
Let $(M^{2n},g,J)$ be a strongly regular compact Vaisman manifold and denote by $\overline{M}:= M/\{\theta^\sharp,J\theta^\sharp\}$ the quotient manifold with the induced structures $\bar g$ and $\bar J$. Then $M$ is a toric Vaisman manifold if and only if $\overline{M}$ is a toric K\"ahler manifold.
\end{theorem}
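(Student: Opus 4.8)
The plan is to prove both implications by passing between $M$ and its K\"ahler quotient $\overline{M}$ through the intermediate Sasaki manifold. First I would recall the geometric setup: for a strongly regular compact Vaisman manifold, the Lee and anti-Lee vector fields $\theta^\sharp, J\theta^\sharp$ are commuting Killing holomorphic vector fields (Lemma~\ref{A5thJth}) generating a free $T^2$-action, so that $\overline{M}=M/T^2$ is a smooth K\"ahler manifold of real dimension $2n-2$. Moreover, quotienting only by the $S^1$-action generated by $\theta^\sharp$ yields the Sasaki manifold $W^{2n-1}$ whose K\"ahler cone is the minimal covering of $M$; thus $\overline{M}$ is precisely the K\"ahler quotient of $W$ by its Reeb flow (the Reeb field being the image of $J\theta^\sharp$), and $\overline{M}$ is a compact K\"ahler manifold carrying the transverse K\"ahler structure. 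I would record these identifications explicitly since both directions rest on them.

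For the forward implication, suppose $M$ is toric with twisted Hamiltonian $T^n$-action. By Lemma~\ref{A5Jthtor} we have $J\theta^\sharp\in\mathfrak{t}_n$, and by Theorem~\ref{A5vaissas} the action descends to a toric Sasaki structure on $W$ with Reeb field corresponding to $J\theta^\sharp$. Now the torus $T^n$ acts on $W$ preserving the Sasaki structure and containing the Reeb flow; quotienting by this $S^1\subset T^n$ produces an effective Hamiltonian action of the residual $(n-1)$-torus $T^{n-1}=T^n/S^1$ on $\overline{M}$. The point is that $\dim_{\mathbb R}\overline{M}=2n-2$, so $T^{n-1}$ is exactly half-dimensional, and it acts holomorphically and (by the standard K\"ahler reduction argument, since the Sasaki momentum map descends to the symplectic quotient) Hamiltonianly. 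Hence $\overline{M}$ is a toric K\"ahler manifold. I would verify effectiveness carefully: the kernel of the residual action must be trivial because the original $T^n$-action on $M$ is effective and $\theta^\sharp$ (which is \emph{not} in $\mathfrak{t}_n$, being non-twisted-Hamiltonian by Lemma~\ref{A5twistedham}(ii)) is what the extra $S^1$-quotient by $\theta^\sharp$ removes.

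For the converse, suppose $\overline{M}$ is toric K\"ahler with a half-dimensional torus $T^{n-1}$ acting holomorphically and Hamiltonianly. The strategy is to lift this action up the tower $\overline{M}=W/S^1\leftarrow W\leftarrow M$. First lift to $W$: since $W\to\overline{M}$ is a principal $S^1$-bundle (the Reeb $S^1$) with a transverse K\"ahler structure pulled back from $\overline{M}$, the $T^{n-1}$-action lifts to an action on $W$ preserving the Sasaki structure, and together with the Reeb $S^1$ this gives an effective $T^n=T^{n-1}\times S^1$ acting on $W$ with $\xi\in\mathfrak{t}_n$, i.e. $W$ is toric Sasaki. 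Then I would invoke Theorem~\ref{A5sasvais}: since $M$ is recovered as $\mathcal{C}(W)/\Gamma$ where $\Gamma$ is generated by a homothety commuting with the Reeb and radial flows, and the toric action commutes with $\Gamma$, the theorem yields that $M$ is toric Vaisman.

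The main obstacle I anticipate is the lifting step in the converse: lifting a torus action across a principal $S^1$-bundle generally meets a topological obstruction (the action may only lift to a cover of the torus, introducing a central extension), and one must check that the lift can be chosen to commute with the Reeb flow and to preserve the full Sasaki structure rather than merely the transverse data. Resolving this requires using strong regularity and compactness to control the holonomy/Euler class of the bundle $W\to\overline{M}$, and choosing the lift so that the combined torus is genuinely a product with the Reeb $S^1$; I would likely argue that since all the relevant vector fields are Killing and commute at the Lie-algebra level, the infinitesimal action integrates to a torus action precisely because the orbits are closed by strong regularity. Establishing effectiveness and the correct half-dimension count in both directions, while routine, must also be done with care to ensure $\theta^\sharp$ is consistently excluded from the twisted Hamiltonian torus.
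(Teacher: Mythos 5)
Your outline routes both implications through the associated Sasaki manifold $W$ and the already-proven Theorems~\ref{A5vaissas} and \ref{A5sasvais}, whereas the paper argues directly between $M$ and $\overline M$ via the principal bundle $\pi\colon M\to\overline M$ and its connection form $\theta-iJ\theta$. The outline is reasonable, but it rests on an identification you assert without proof: that the Sasaki manifold $W$ whose K\"ahler cone is the \emph{minimal covering} of $M$ coincides with $M/\{\theta^\sharp\}$, so that $W/S^1_\xi=\overline M$. In general $\Gamma_{\mathrm{min}}$ acts on $\mathcal{C}(W)=\mathbb{R}\times W$ by $(t,w)\mapsto(t+\rho,\psi(w))$ with $\psi$ not necessarily the identity, so $W$ is only a covering of $M/\{\theta^\sharp\}$ with deck group the image of $\Gamma_{\mathrm{min}}$ in $\mathrm{Isom}(W)$; one must check that this image is trivial (or otherwise descend the torus action through it), and also verify the commutation hypothesis of Theorem~\ref{A5sasvais} for the relevant $\Gamma$. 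This affects both directions of your argument and is not free, though it is plausibly repairable under strong regularity.

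The more serious gap is the lifting step in the converse, which you correctly flag as the main obstacle but do not resolve. The justification you propose --- that the infinitesimal action integrates to a torus action ``because the orbits are closed by strong regularity'' --- does not work: strong regularity only controls the orbits of $\theta^\sharp$ and $J\theta^\sharp$, and says nothing about the orbits of the lifted generators. Two separate mechanisms are needed and both are missing from your plan. First, the plain horizontal lift $Y_j$ of a fundamental vector field $\overline X_j$ does \emph{not} preserve the connection: $\mathcal{L}_{Y_j}(J\theta)=\iota_{Y_j}\pi^*\overline\omega=\pi^*(d\bar\mu^{X_j})\neq 0$, so $Y_j$ is neither holomorphic nor twisted Hamiltonian. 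The correct lift is $Z_j:=Y_j-(\bar\mu^{X_j}\circ\pi)\,J\theta^\sharp$, i.e.\ the horizontal lift corrected by the Hamiltonian function times the vertical field; the vanishing of $[Z_j,Z_k]$ then uses $\overline\omega(\overline X_j,\overline X_k)=\{\bar\mu^{X_j},\bar\mu^{X_k}\}=\bar\mu^{[X_j,X_k]}=0$. Second, even with this correction one only obtains an $\mathbb{R}^{n-1}$-action; closing it up into a $T^{n-1}$-action uses the freedom to shift each $\bar\mu^{X_j}$ by an additive constant (equivalently, to adjust $Z_j$ by a constant multiple of $J\theta^\sharp$), following Gauduchon's argument for lifting circle actions to principal circle bundles --- this is exactly Step~3 of the paper's proof. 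Without these two ingredients the converse does not go through. The forward direction of your proposal is essentially sound modulo the identification issue above, and is close in spirit to the paper's, which restricts the descended $T^n$-action to a complement $T^{n-1}$ of the $J\theta^\sharp$-circle and pushes down $p\circ\mu$ to obtain the momentum map on $\overline M$.
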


\begin{proof}
Let $(M,g,J)$ be a strictly regular compact Vaisman manifold with Lee vector field $\theta^{\sharp}$ and let \mbox{$\pi\colon M\to \overline{M}$} denote the projection onto the quotient manifold. By definition, each of the vector fields $\theta^{\sharp}$ and $J\theta^{\sharp}$ gives rise to a circle action, which by Lemma~\ref{A5thJth} are both holomorphic and isometric. In this case, the metric $g$ and the complex structure $J$ descends through $\pi$ to the quotient manifold $\overline{M}$
and it is a well-known result that $(\overline{M},\bar g,\bar J)$ is a compact Hodge manifold and $\pi$ is a Riemannian submersion. 
Moreover, the curvature form of the connection $1$-form $\theta-iJ\theta$ of the principal bundle $\pi\colon M\to \overline{M}$ projects onto the K\"ahler form $\overline{\omega}$ of $\overline{M}$. For details on these results see \emph{e.g.} \cite{A5Vaisman1982} or \cite[Theorem 6.3]{A5DO1998}.

Let us first assume that $(M^{2n},g,J)$ is a compact toric Vaisman manifold, \emph{i.e.} there is an effective twisted Hamiltonian holomorphic action $\tau: T^{n}\to \mathrm{Diff}(M)$ with momentum map $\mu\colon M\to \mathbb{R}^n$. We show that there is a naturally induced effective Hamiltonian action on the compact K\"ahler quotient, $\bar\tau\colon T^{n-1}\to \mathrm{Diff}(\overline{M})$, where $T^{n-1}$ is the torus obtained from $T^n$ by quotiening out the direction corresponding to the circle action of $J\theta^\sharp$, which by Lemma~\ref{A5Jthtor} lies in the torus. Hence, by the definition of $T^{n-1}$, we have the following relation between the Lie algebras of the corresponding tori: $\mathfrak{t}_n=\mathfrak{t}_{n-1}\oplus\<J\theta^{\sharp}\>$. Since $\{\theta^{\sharp},J\theta^{\sharp}\}$ is included in the center of $\mathfrak{aut}(M)$, \emph{cf.} Lemma~\ref{A5thJth}, it follows that the action of $T^n$ on $M$ naturally descends through $\pi$ to an action on the quotient $\overline{M}= M/\{\theta^\sharp,J\theta^\sharp\}$.
By Remark~\ref{A5torvaisisom}, the action of $T^n$ on $(M,g,J)$ is both holomorphic and isometric. Thus, the action induced by $T^n$ on $(\overline{M},\bar g,\bar J)$ is also isometric and holomorphic. This can be checked as follows. By the definition of the induced action, the fundamental vector fields $X_M$ and $X_{\overline{M}}$, associated to the action of any $X\in\mathfrak{t_n}$ on  $M$, respectively on $\overline{M}$, are related by $\pi_{*}(X_M)=X_{\overline{M}}$.  Since the Lie derivative  commutes with the pull-back, we have $\mathcal{L}_{X_{\overline{M}}}\bar g=\mathcal{L}_{\pi_*(X_M)}\pi_* g=\pi_*(\mathcal{L}_{X_M}g)=0$ and similarly  we obtain $\mathcal{L}_{X_{\overline{M}}}\bar J=0$. This action of $T^n$ on $\overline{M}$ is not effective, since it is trivial in the direction of the anti-Lee vector field $J\theta^\sharp\in \mathfrak{t}_n$. However, as $\theta^{\sharp}\notin \mathfrak{t}_n$ by Lemma~\ref{A5rtw}, it follows that the restriction of this action to the above defined "complementary'' torus $T^{n-1}$ of $J\theta^{\sharp}$ in $T^n$ is effective. We denote it by $\bar\tau\colon T^{n-1}\to \mathrm{Diff}(\overline{M})$ and we only need to check that it is a Hamiltonian action on the symplectic manifold $(\overline{M}, \overline{\omega})$. For this, we consider the map $p\circ\mu\colon M\to \mathbb{R}^{n-1} $, where
$p\colon \mathbb{R}^n\to \mathbb{R}^{n-1}$ denotes the projection from $\mathfrak{t}_n$ to $\mathfrak{t}_{n-1}$, corresponding to quotiening out $\<J\theta^{\sharp}\>$, through the identification of the deals of the Lie algebras of $T^n$ and $T^{n-1}$ with $\mathbb{R}^n$, respectively $\mathbb{R}^{n-1}$. It follows that for any $X\in \mathfrak{t}_{n-1}$, the function $(p\circ\mu)(X)\colon M\to\mathbb{R}$ is invariant under the action of $\theta^{\sharp}$ and of $J\theta^{\sharp}$, hence it is projectable  through $\pi$ onto a function on $\overline{M}$, which we denote $\bar \mu^X$.  It turns out that $\bar\mu$ is the momentum map of the action $\bar\tau$. In order to show this, let $\bar x\in \overline{M}$, $\overline{Y}\in T_{\bar x} \overline{M}$, choose $x\in M$ with $\pi(x)=\bar x$ and let $Y:=(d_x\pi|_{\<\theta^{\sharp}, J\theta^{\sharp}\>^\perp})^{-1}(\overline{Y})$. We compute at the point $\bar x$, for all $X\in\mathfrak{t}_{n-1}$: $(\iota_{X_{\overline{M}}}\overline{\omega})(\overline Y)=(\pi_*\omega)(\pi_* X_M,\pi_* Y)$ and on the other hand:
$$d_{\bar x}\bar\mu^{X}(\overline{Y})=d_x\mu^{X}(Y)=\mathrm{d}^{\theta}_x\mu^{X}(Y)=(\iota_{X_M}\omega)_x(Y)=\omega_x({X_M}_x,Y),$$ thus proving that $\iota_{X_{\overline{M}}}\overline{\omega}=d\bar\mu^X$, for all $X\in\mathfrak{t}_{n-1}$. Note that by definition $\bar\mu$ inherits from $\mu$ the property of being a Lie algebra homomorphism from $\mathbb{R}^{n-1}$ endowed with the trivial Lie bracket to $\mathcal{C}^\infty(\overline{M})$ endowed with the Poisson bracket. Concluding, we have shown that the induced action $\bar\tau\colon T^{n-1}\to \mathrm{Diff}(\overline{M})$ is an effective holomorphic Hamiltonian action on $\overline{M}$, so $(\overline{M}, \bar g, \bar J)$ is a compact K\"ahler toric manifold.

Conversely, let us assume that $(M^{2n},g,J)$ is a strictly regular compact Vaisman manifold, such that the K\"ahler quotient $(\overline{M},\bar g,\bar J)$ is a toric K\"ahler manifold. We show that $(M,g,J)$ is then a toric Vaisman manifold. By assumption, $\overline{M}^{2n-2}$ is equipped with an effective Hamiltonian holomorphic action of $T^{n-1}$, which we denote by $\bar\tau\colon T^{n-1}\to \mathrm{Diff}(\overline{M})$. We also denote by $\bar\mu\colon \overline{M} \to \mathbb{R}^{n-1}$ one of the momentum maps of this action, which is unique up to an additive constant.  Let $X_1, \dots, X_{n-1}$ be a basis of the Lie algebra $\mathfrak{t}_{n-1}\cong \mathbb{R}^{n-1}$ and $f_j:=\mu^{X_j}$ the corresponding Hamiltonian functions on $\overline{M}$, \emph{i.e.} $\iota_{{\overline{X}_j}}\overline{\omega}=d f_j$, for $j\in\{1,\dots,n\}$, where for simplicity we denote the fundamental vector fields of the action on $\overline{M}$ by $\overline{X}_j=X_j^{\overline M}\in \mathfrak{X}(M)$. We now consider the horizontal distribution $\mathcal{D}$ defined for each $x\in M$ by 
$\mathcal{D}_x:=\{X\in T_x M\,|\, X\perp \theta_x^{\sharp},X\perp J\theta_x^{\sharp}\}$ and the corresponding horizontal pull-back $Y_j$ of each vector field $X_j$ through $\pi$,
\emph{i.e.} for each $x\in M$, $(Y_j)_x:=((d_x\pi)|_{\mathcal{D}_x})^{-1}((\overline{X_j})_{\pi(x)})$, for $j\in\{1,\dots,n\}$. Each vector field $Y_j$ is complete and thus gives rise to an action of $\mathbb{R}$ on $M$. In the sequel, we modify them in the direction of $J\theta^{\sharp}$ in order to obtain the lifted torus action on $M$. 

\noindent Set $Z_j:=Y_j-(f_j\circ \pi) J\theta^{\sharp}$, for $1\leq j\leq n-1$. 

\noindent {\bf Step 1.} We first show that each of these vector fields commutes with $\theta^{\sharp}$ and with $J\theta^{\sharp}$ and preserves the distribution $\mathcal{D}$, since it leaves invariant both $1$-forms $\theta$ and $J\theta$. Namely, we compute for $1\leq j\leq n-1$:
\begin{equation}\label{A5commzth}
 [Z_j,\theta^{\sharp}]=\left[Y_j-f_j\circ \pi \cdot J\theta^{\sharp},\theta^{\sharp}\right]=0,\quad
 [Z_j,J\theta^{\sharp}]=\left[Y_j-f_j\circ \pi \cdot J\theta^{\sharp},J\theta^{\sharp}\right]=0,
 \end{equation}
where we use that all functions $f_j\circ \pi$ and all horizontal lifts $Y_j$ are by definition constant along the flows of $\theta^{\sharp}$ and of $J\theta^{\sharp}$. We further obtain by applying the Cartan formula:
\[\mathcal{L}_{Z_j}\theta=\iota_{Z_j} d\theta+d(\iota_{Z_j}\theta)=0,\] 
since $\theta$ is closed and $Z_j$ is by definition in the kernel of $\theta$. 
Since the curvature of the connection $1$-form of the principal bundle 
$\pi\colon M\to\overline{M}$ projects onto the K\"ahler form $\overline{\omega}$, we obtain:
\[\mathcal{L}_{Z_j}(J\theta)=\iota_{Z_j} d(J\theta)+d(\iota_{Z_j}(J\theta))=\iota_{Z_j}\pi^*\overline{\omega}-d(f_j\circ\pi)=\pi^*(\iota_{\overline{X}_j}\overline{\omega})-\pi^*(df_j)=0.\] 
Indeed, in order to obtain this last equality it was necessary to perturb the horizontal lifts $Y_j$ in the direction of $J\theta^{\sharp}$. We claim that  the set  $\{Z_1, \cdots, Z_{n-1}\}$ gives rise to an effective action of $\mathbb{R}^{n-1}$ on $M$. For this, it is sufficient to check that all their Lie brackets vanish, so they are commuting vector fields. The property of being an effective action is a consequence of the effectiveness of the action of $T^{n-1}$ on $\overline{M}$. 
For each $j,k\in\{1,\dots, n-1\}$ we compute:
\begin{equation}\label{A5comm}
\begin{split}
[Z_j,Z_k]&=\left[Y_j-f_j\circ \pi\cdot J\theta^{\sharp}, Y_k-f_k\circ \pi\cdot J\theta^{\sharp}\right]\\
&=[Y_j,Y_k]-Y_j\left(f_k\circ \pi\right)J\theta^{\sharp}+Y_k\left(f_j\circ \pi\right)J\theta^{\sharp}\\
&=[\overline{X}_j,\overline{X}_k]^*-\left(\overline{\omega}(\overline{X}_j, \overline{X}_k)-d f_j(\overline{X}_k)+d f_k(\overline{X}_j)\right)\circ\pi  \cdot J\theta^{\sharp}\\
&=[\overline{X}_j,\overline{X}_k]^*-\overline{\omega}(\overline{X}_k, \overline{X}_j)\circ\pi  \cdot J\theta^{\sharp}=0,
\end{split}
\end{equation}
where by $^*$ is denoted the horizontal lift to the distribution $\mathcal{D}$ of a vector field on $\overline{M}$.  For the above equalities we used again that $J\theta^{\sharp}(f_j\circ \pi)=0$ and $[Y_j,J\theta^{\sharp}]=0$, for all $1\leq j\leq n-1$, as well as the fact that the curvature of the connection $1$-form of the principal bundle 
equals $\pi^*\overline{\omega}$. Note that the last equality in \eqref{A5comm} follows from the commutation of the  vector fields $\overline{X}_j$ induced by the torus action and from the property of the momentum map $\bar\mu\colon \mathbb{R}^{n-1}\to \mathcal{C}^\infty(\overline{M})$ of being a Lie algebra homomorphism, where $\mathcal{C}^\infty(\overline{M})$ is endowed with the Poisson bracket. Hence, for any $j,k$, the following relations hold: $0=\bar\mu([X_j,X_k])=\{\bar\mu^{X_j}, \bar\mu^{X_k}\}=\overline{\omega}(\overline{X}_j,\overline{X}_k)$.

\noindent {\bf Step 2.} We now check that the above defined action of $\mathbb{R}^{n-1}$ on $M$ is holomorphic, isometric and twisted Hamiltonian.  For each $1\leq j\leq n-1$, $Z_j$ is a holomorphic vector field by the following computation, which uses the commutation relations from \eqref{A5commzth}:
\[(\mathcal{L}_{Z_j} J)(\theta^{\sharp})=[Z_j,J\theta^{\sharp} ]-J[Z_j,\theta^{\sharp} ]=0, \;  (\mathcal{L}_{Z_j} J)(J\theta^{\sharp})=-[Z_j,\theta^{\sharp} ]-J[Z_j,J\theta^{\sharp} ]=0,\]
and since for each projectable horizontal vector field $Y\in\Gamma(\mathcal{D})$ with $\pi_* Y=\overline{Y}$ we have:
\begin{equation*}
\begin{split}
(\mathcal{L}_{Z_j} J)(Y)=&[Y_j-(f_j\circ \pi)J\theta^{\sharp} ,JY]-J[Y_j-(f_j\circ \pi)J\theta^{\sharp},Y]\\
=&[Y_j,JY]-(f_j\circ \pi) [J\theta^{\sharp} ,JY]+JY(f_j\circ \pi)J\theta^{\sharp} \\
&-J[Y_j,Y]+(f_j\circ \pi)J[J\theta^{\sharp},Y]+Y(f_j\circ \pi)\theta^{\sharp}\\
=&[\overline{X}_j,\overline{J}\, \overline{Y}]^*-\overline{\omega}(\overline{X}_j,\overline{J}\, –\overline{Y})\circ\pi \cdot J\theta^{\sharp}+d f_j(\overline{J}\, \overline{Y})\circ\pi \cdot J\theta^{\sharp}\\
&-J([\overline{X}_j, \overline{Y}]^*-\overline{\omega}(\overline{X}_j,–\overline{Y})\circ\pi \cdot J\theta^{\sharp})+d f_j( \overline{Y})\circ\pi \cdot \theta^{\sharp}\\
=&([\overline{X}_j,\overline{J}\, \overline{Y}]-\overline{J}[\overline{X}_j, \overline{Y}])^*=0,
\end{split}
\end{equation*}
where we use that by definition $\overline{J}$ is the projection of $J$, so $JY=(\overline{J}\, –\overline{Y})^*$, as well as the equalities $[J\theta^{\sharp}, JY]=J[J\theta^{\sharp},Y]$, following from the fact that $J\theta^{\sharp}$ is a holomorphic vector field, \emph{cf.} Lemma \ref{A5thJth}. A similar computation yields that $\mathcal{L}_{Z_j} g=0$, for $1\leq j\leq n-1$, showing that each $Z_j$ is a Killing vector field on $M$ with respect to $g$. 
We further compute for each $j\in\{1,\dots, n-1\}$:
\begin{equation*}
\begin{split}
\iota_{Z_j}\omega&=\iota_{Y_j}\omega-(f_j\circ\pi)\iota_{J\theta^{\sharp}}\omega=\pi^*(\iota_{\overline{X}_j}\overline{\omega})-(f_j\circ\pi)\theta\\
&=d (f_j\circ\pi)-(f_j\circ\pi)\theta=\mathrm{d}^{\theta}(f_j\circ\pi),
\end{split}
\end{equation*}
by taking into account that $\omega=\pi^*\overline{\omega}-\theta\wedge J\theta$. This proves that each vector field $Z_j$ is twisted Hamiltonian with respect to $\omega$ with Hamiltonian function $f_j\circ\pi$. Altogether, we have shown that the action of $\mathbb{R}^{n-1}$ defined by the vector fields $Z_1, \dots, Z_j$ respects the whole Vaisman structure $(g,J,\omega)$ and is twisted Hamiltonian with momentum map $\mu
\colon \mathbb{R}^{n-1}\to\mathcal{C}^\infty(M)$, defined by $\mu^{X_j}:=\bar\mu^{X_j}\circ \pi$, for $1\leq j\leq n-1$. We also remark that $\mu$ is a Lie algebra homomorphism for $\mathcal{C}^\infty(M)$ endowed with the Poisson bracket defined by \eqref{A5Pb}, as $\bar\mu$ is a Lie algebra homomorphism and the Poisson brackets are compatible through $\pi$. More precisely, we have for all $j,k\in\{1,\dots, n-1\}$: 
\begin{equation*}
\begin{split}
\{\mu^{X_j}, \mu^{X_k}\}\overset{\eqref{A5Pb}}{=}&\omega(Z_j,Z_k)=\omega(Y_j-(f_j\circ\pi) J\theta^{\sharp} ,Y_k-(f_k\circ\pi) J\theta^{\sharp})=\\
=&\omega(Y_j, Y_k)-(f_j\circ\pi)\omega(J\theta^{\sharp}, Y_k)+(f_k\circ\pi)\omega(J\theta^{\sharp}, Y_j)\\
=&\overline\omega(\overline{X}_j, \overline{X}_k)\circ \pi-(f_j\circ\pi) \theta(Y_k)+(f_k\circ\pi) \, \theta(Y_j)\\
=&\{\bar\mu^{X_j}, \bar\mu^{X_k}\}\circ\pi
=\bar\mu^{[X_j,X_k]}\circ \pi=\mu^{[X_j,X_k]}.
\end{split}
\end{equation*}

\noindent {\bf Step 3.} Next we show that the freedom in choosing the momentum map $\bar\mu$ of $\overline{M}$ up to an additive constant in $\mathbb{R}^{n-1}$ allows us to make the above defined action of $\mathbb{R}^{n-1}$ on $M$ into an action of $T^{n-1}=S^1\times \cdots \times S^1$ on $M$, thus lifting the action of $T^{n-1}$ on $\overline{M}$. It is enough to establish this result for one direction and then apply it independently for each of the directions $X_j$, $j\in\{1,\dots, n-1\}$. The argument  can be found in \cite{A5Gauduchon-Extremal-Calabi}, Section 7.5., in the more general setting of lifting $S^1$-actions to Hermitian complex line bundles equipped with a $\mathbb{C}$-linear connection, whose curvature form is preserved by the circle action and in which case the property of being Hamiltonian is with respect to the closed curvature $2$-form, which is not assumed to be symplectic. For the convenience of the reader we sketch here this argument using the above notation.
Let $X$ be a vector field on $\overline M$ which generates an $S^1$-action, whose orbits are assumed to have period $1$. One considers the flow $\Phi_s$ of the horizontal lift of $X$ to 
$M/\{\theta^\sharp\}$, which then satisfies: $\Phi_1(x)=\zeta(\bar x) \cdot x$, for all $x\in M/\{\theta^\sharp\}$, where $\bar x$ is the projection of $x$ on $\overline M$ and $\zeta\colon\overline M\to S^1$. The main step is to show that $\zeta$ is constant. This follows from the following formula $\Phi_1 \cdot V=V-i\frac{\mathrm{d}\zeta(\overline V)}{\zeta}J\theta^\sharp$, for all $\overline V \in\mathfrak{X}(\overline M)$ and $V$ its horizontal lift to $M/\{\theta^\sharp\}$,  and from the fact that $\Phi_s$ preserves the horizontal distribution for all values of $s$. An $S^1$-action on $M/\{\theta^\sharp\}$ lifts to an $S^1$-action on $M$, since $\theta$ is closed, so its kernel defines an integrable distribution on~$M$. Applied to our case, this result yields that for each $1\leq j\leq n-1$, the Hamiltonian function $f_j\in\mathcal{C}^{\infty}(M)$ may be chosen (by adding an appropriate real constant, determined by $\zeta$, which is well-defined up to an additive integer), such that the vector field $Z_j=Y_j-(f_j\circ\pi) \cdot J\theta^{\sharp}$ is the generator of an $S^1$-action on $M$, where $Y_j$ denotes the horizontal lift of $\overline{X}_j$. 

Since $J\theta^{\sharp}$ is a Killing, holomorphic, twisted Hamiltonian vector field lying in the center of $\mathfrak{aut}(M)$, \emph{cf.} Lemma \ref{A5thJth} and Lemma~\ref{A5twistedham}, it follows that the circle action induced by $J\theta^{\sharp}$ commutes with the circle actions induced by the above defined action of $T^{n-1}$ on $M$, thus giving rise together to an effective holomorphic twisted Hamiltonian action of $T^n$ on $(M^{2n},g,J)$, showing that this is a toric Vaisman manifold.\end{proof}

Applying this result to the Vaisman manifold $S^1\times S^{2n-1}$  as described in Example \ref{A5example}, which is strongly regular, as $\theta^\sharp$ is the tangent vector to the $S^1$ factor and 
$J\theta^\sharp$ is tangent to the Hopf action on $S^{2n-1}$. This action commutes 
with the torus action defined in  Example \ref{A5example}. It follows that this $T^n$-action descends to the quotient 
$(S^1\times S^{2n-1})/\{\theta^\sharp, J\theta^\sharp\}\simeq \mathbb C P^n$. 

\begin{remark}\label{A5twostep}
If $(M^{2n},g,J)$ is a regular compact Vaisman manifold, then the proof of Theorem~\ref{A5vaisreg} also shows that the following equivalence holds: $M$ is a toric Vaisman manifold if and only if $M/\{\theta^{\sharp}\}$ is a toric Sasaki manifold. Again, this statement is only about the toric structure, since it is well-known that the quotient manifold is Sasaki (see \emph{e.g.} \cite{A5Vaisman1982} or \cite{A5DO1998}). 

\end{remark}

We note that the proofs of the theorems of the last two sections are  both  constructive, showing that the torus actions on the  Sasaki (respectively K\"ahler) manifold and on the Vaisman manifold naturally induce one another.

\bibliographystyle{amsplain}

\begin{thebibliography}{10}

\bibitem{A5Abreu1998}
M.~Abreu, \emph{K\"ahler geometry of toric varieties and extremal metrics},
  Internat. J. Math. \textbf{9} (1998), no.~6, 641--651.

\bibitem{A5Abreu2001}
\bysame, \emph{K\"ahler metrics on toric orbifolds}, J. Differential Geom.
  \textbf{58} (2001), no.~1, 151--187.

\bibitem{A5Abreu2010}
\bysame, \emph{K\"ahler-{S}asaki geometry of toric symplectic cones in
  action-angle coordinates}, Port. Math. \textbf{67} (2010), no.~2, 121--153.

\bibitem{A5ACG2006}
V.~Apostolov, D.~M.~J. Calderbank, and P.~Gauduchon, \emph{Hamiltonian 2-forms
  in {K}\"ahler geometry. {I}. {G}eneral theory}, J. Differential Geom.
  \textbf{73} (2006), no.~3, 359--412.

\bibitem{A5BM2015}
F.~Belgun and A.~Moroianu, \emph{On the irreducibility of locally metric
  connections}, J. reine angew. Math. (2016).

\bibitem{A5BD2000}
R.~Bielawski and A.S. Dancer, \emph{The geometry and topology of toric
  hyperk\"ahler manifolds}, Comm. Anal. Geom. \textbf{8} (2000), no.~4,
  727--760.

\bibitem{A5BG2008}
C.~Boyer and K.~Galicki, \emph{Sasakian geometry}, Oxford Mathematical
  Monographs, no. xii+613, Oxford University Press, Oxford, 2008.

\bibitem{A5CDG2003}
D.M.~J. Calderbank, L.~David, and P.~Gauduchon, \emph{The {G}uillemin formula
  and {K}\"ahler metrics on toric symplectic manifolds}, J. Symplectic Geom.
  \textbf{1} (2003), no.~4, 767--784.

\bibitem{A5Dasilva2001}
A.~Cannas da~Silva, \emph{Lectures on symplectic geometry}, Lecture Notes in
  Mathematics, vol. 1764, Springer-Verlag, Berlin, 2001.

\bibitem{A5Delzant1988}
T.~Delzant, \emph{Hamiltoniens p\'eriodiques et images convexes de
  l'application moment}, Bull. Soc. Math. France \textbf{116} (1988), no.~3,
  315--339.

\bibitem{A5DO1998}
S.~Dragomir and L.~Ornea, \emph{Locally conformal {K}\"ahler geometry},
  Progress in Mathematics, vol. 155, Birkh\"auser Boston, Inc., Boston, MA,
  1998.

\bibitem{A5FOW2009}
A.~Futaki, H.~Ono, and G.~Wang, \emph{Transverse {K}\"ahler geometry of
  {S}asaki manifolds and toric {S}asaki-{E}instein manifolds}, J. Differential
  Geom. \textbf{83} (2009), no.~3, 585--635.

\bibitem{A5Gauduchon-Extremal-Calabi}
P.~Gauduchon, \emph{Calabi's extremal k\"ahler metrics: An elementary
  introduction}, Lecture Notes, unpublished.
  
  \bibitem{A5Gauduchon-Moroianu2016}
P.~Gauduchon and A.~Moroianu, \emph{Weyl-Einstein structures on $K$-contact manifolds}, arXiv: 1601.00892.
  

\bibitem{A5GOP2005}
R.~Gini, L.~Ornea, and M.~Parton, \emph{Locally conformal {K}\"ahler
  reduction}, J. Reine Angew. Math. \textbf{581} (2005), 1--21.

\bibitem{A5GOPP2006}
R.~Gini, L.~Ornea, M.~Parton, and P.~Piccinni, \emph{Reduction of {V}aisman
  structures in complex and quaternionic geometry}, J. Geom. Phys. \textbf{56}
  (2006), no.~12, 2501--2522.

\bibitem{A5Guillemin1994}
V.~Guillemin, \emph{K\"ahler structures on toric varieties}, J. Differential
  Geom. \textbf{40} (1994), no.~2, 285--309.

\bibitem{A5HR2001}
S.~Haller and T.~Rybicki, \emph{Reduction for locally conformal symplectic
  manifolds}, J. Geom. Phys. \textbf{37} (2001), no.~3, 262--271.

\bibitem{A5KL2015}
Y.~Karshon and E.~Lerman, \emph{Non-compact symplectic toric manifolds}, SIGMA
  Symmetry Integrability Geom. Methods Appl. \textbf{11} (2015), Paper 055, 37.

\bibitem{A5L2003}
E.~Lerman, \emph{Contact toric manifolds}, J. Symplectic Geom. \textbf{1}
  (2003), 785--828.

\bibitem{A5LT1997}
E.~Lerman and S.~Tolman, \emph{Hamiltonian torus actions on symplectic
  orbifolds and toric varieties}, Trans. Amer. Math. Soc. \textbf{349} (1997),
  no.~10, 4201--4230.

\bibitem{A5MSY2006}
D.~Martelli, J.~Sparks, and S.-T. Yau, \emph{The geometric dual of
  {$a$}-maximisation for toric {S}asaki-{E}instein manifolds}, Comm. Math.
  Phys. \textbf{268} (2006), no.~1, 39--65.

\bibitem{A5MO2009}
A.~Moroianu and L.~Ornea, \emph{Transformations of locally conformally
  {K}\"ahler manifolds}, Manuscripta Math. \textbf{130} (2009), no.~1, 93--100.

\bibitem{A5Otiman}
A.~Otiman, \emph{Locally conformally symplectic bundles}, preprint (2015),
  arXiv:1510.02770.

\bibitem{A5Vaisman1979}
I.~Vaisman, \emph{Locally conformal {K}\"ahler manifolds with parallel {L}ee
  form}, Rend. Mat. (6) \textbf{12} (1979), no.~2, 263--284.

\bibitem{A5Vaisman1982}
\bysame, \emph{Generalized {H}opf manifolds}, Geom. Dedicata \textbf{13}
  (1982), no.~3, 231--255.

\bibitem{A5Vaisman1985}
\bysame, \emph{Locally conformal symplectic manifolds}, Internat. J. Math.
  Math. Sci. \textbf{8} (1985), no.~3, 521--536.

\bibitem{A5vC2009}
C.~van Coevering, \emph{Some examples of toric {S}asaki-{E}instein manifolds},
  Riemannian topology and geometric structures on manifolds, Progr. Math., vol.
  271, Birkh\"auser Boston, Boston, MA, 2009, pp.~185--232.

\end{thebibliography}

\end{document}